\theoremstyle{plain}
\newtheorem{theorem}{Theorem}[section]
\newtheorem{corollary}[theorem]{Corollary}
\newtheorem{lemma}[theorem]{Lemma}
\newtheorem{proposition}[theorem]{Proposition}
\theoremstyle{definition}
\newtheorem{example}[theorem]{Example}
\newcommand{\Ext}{\mbox{\rm Ext}}
\newcommand{\Hom}{\mbox{\rm Hom}}
\newcommand{\Ker}{\mbox{\rm Ker}}
\newcommand{\Z}{\mbox{\rm Z}}
\begin{document}
\title{Special precovers and preenvelopes of complexes*}
\author{\normalsize {Zhanping WANG$^{1,2}$ \ \ \ \ Zhongkui LIU$^{2}$}\\
\small 1. Department
of Mathematics, Shanghai Jiao Tong University\\
\small Shanghai, 200240, PR China\\
\small 2. Department
of Mathematics, Northwest Normal University\\
\small Lanzhou, 730070, PR China\\
\small E-mail: wangzp@nwnu.edu.cn; liuzk@nwnu.edu.cn\\}
\footnote[0]{*Supported by National Natural Science Foundation of China (Grant No. 11201377, 11261050) and Program of Science and Technique of Gansu Province (Grant No. 1208RJZA145).}
\date{} \maketitle
\hspace{6.3cm}\noindent{\footnotesize {\bf Abstract}}

\vspace{0.2cm} The notion of an $\mathcal{L}$ complex (for a given class of $R$-modules $\mathcal{L}$) was introduced by Gillespie: a complex $C$ is called $\mathcal{L}$ complex
 if $C$ is exact and  $\Z_{i}(C)$ is in $\mathcal{L}$ for all $i\in \mathbb{Z}$. Let $\widetilde{\mathcal{L}}$ stand for the
class of all $\mathcal{L}$ complexes. In this paper, we give sufficient condition on a class of $R$-modules such that every complex has a special $\widetilde{\mathcal{L}}$-precover (resp., $\widetilde{\mathcal{L}}$-preenvelope). As applications, we obtain that  every complex has a special projective precover and a special injective preenvelope, over a coherent ring every complex has a special FP-injective preenvelope, and over a noetherian ring every complex has a special $\widetilde{\mathcal{GI}}$-preenvelope, where $\mathcal{GI}$ denotes the class of Gorenstein injective modules.

\noindent{ \bf Key Words:} $\mathcal{L}$ complex;
 precover; preenvelope.

\noindent {\bfseries Mathematics Subject Classification(2010):}
16E05, 16E10, 18G35.

\section{Introduction}
Covers and envelopes over any class of modules were defined by Enochs in \cite{EJ81} which unified all the well known covers and envelopes, such as injective envelopes, projective covers and so on. For a given class $\mathcal{L}$ of modules, one of the most important facts about $\mathcal{L}$-(pre)covers and $\mathcal{L}$-(pre)envelopes is that
their existence permits the construction of $\mathcal{L}$-resolutions in an adequate manner to compute homology and cohomology (see \cite{EJ00} for details). This makes very interesting
the study of the existence of $\mathcal{L}$-(pre)covers and $\mathcal{L}$-(pre)envelopes not just when the
class $\mathcal{L}$ is that of all projective or injective modules, but for other important classes
of modules, and not just in the setting of the categories of modules, but for more
general abelian categories. As a particular and important example, the study of
(pre)covers and (pre)envelopes in the category of complexes of modules has been
treated by different authors (see \cite{AEG01, EG99, EG98, EO02, Gar99, Gil04, Ia11, LN11, WL11}). For example, Aldrich et al. proved in \cite{AEG01} that over any ring $R$ every complex admits a flat cover. Also Gillespie \cite{Gil04} showed this by using the
method more analogous to the case of $R$-modules.

Let $\mathcal{L}$ be a class of $R$-modules. According to \cite{Gil04}, a complex $C$ is called $\mathcal{L}$ complex if it is exact and $\Z_{i}(C)$ is in $\mathcal{L}$ for $i\in \mathbb{Z}$, and the class of $\mathcal{L}$ complexes is denoted by $\widetilde{\mathcal{L}}$. For example, if $\mathcal{L}$ is the class of flat (resp., projective, injective) $R$-modules, an $\mathcal{L}$ complex is actually a flat (resp., projective, injective) complex (see \cite{EG98, Gar99}).

In the paper, we give sufficient condition on a class of $R$-modules $\mathcal{L}$ in order for the class of complexes $\widetilde{\mathcal{L}}$ to be (pre)covering or (pre)enveloping in the category of complexes of $R$-modules. It is proved that if $\mathcal{L}$ satisfy $\Ext^{1}(L, L^{'})=0$ for all $L,L^{'}\in \mathcal{L}$, then every module has an epic $\mathcal{L}$-precover if and only if every complex has an epic $\widetilde{\mathcal{L}}$-precover. Let $\mathcal{L}$ be a projectively resolving and special precovering class in $R$-Mod. We show that every complex has a special $\widetilde{\mathcal{L}}$-precover. Also, the special $\widetilde{\mathcal{L}}$-preenvelope of a complex is discussed. As applications, we obtain that every complex has a special projective precover and a special injective preenvelope, over a coherent ring every complex has a special FP-injective preenvelope, and over a noetherian ring every complex has a special $\widetilde{\mathcal{GI}}$-preenvelope, where $\mathcal{GI}$ denotes the class of Gorenstein injective modules.

\section{Preliminaries}
Let $\mathcal{L}$ be class of objects in an abelian category
$\mathcal{C}$. Let $M$ be an object of $\mathcal{C}$. We recall the
definition introduced in \cite{EJ81}. A morphism $f: L\rightarrow M$
is called an $\mathcal{L}$-precover of $M$ if $L\in \mathcal{L}$ and $\Hom(L^{'}, L)\rightarrow \Hom(L^{'}, M)\rightarrow 0$
is exact for all $L^{'}\in \mathcal{L}$. If, moreover, any $g: L\rightarrow L$ such that $fg=f$ is an automorphism of $L$ then $f: L\rightarrow M$ is called an $\mathcal{L}$-cover of $M$. An $\mathcal{L}$-
preenvelope and an $\mathcal{L}$-envelope of $M$ are defined dually. It is immediate that covers and envelopes, if they exist, are unique up to isomorphism, and that if $\mathcal{L}$ contains
all  projective (injective) objects, then $\mathcal{L}$-(pre)covers ($\mathcal{L}$-(pre)envelopes) are always
surjective (injective). An epimorphism $f: L\rightarrow M$ is called a special precover if $\Ext^{1}(L, \Ker(f))=0$ for all $L\in \mathcal{L}$. We say a class $\mathcal{L}$ of objects of $\mathcal{C}$ is (pre)covering if every
object of $\mathcal{C}$ has an $\mathcal{L}$-(pre)covering. Dually, we have the concepts of special preenvelope and (pre)enveloping class.

Throughout this paper, let $R$ be an associative ring, $R$-Mod the category of left $R$-modules and $\mathcal{C}(R)$ the category of complexes of left $R$-modules. A complex \begin{center}$\cdots\stackrel{\delta_{2}}\longrightarrow
C_{1}\stackrel{\delta_{1}}\longrightarrow
C_{0}\stackrel{\delta_{0}}\longrightarrow
C_{-1}\stackrel{\delta_{-1}}\longrightarrow\cdots$
\end{center}
of left $R$-modules will be denoted $(C, \delta)$ or $C$. Given a left
$R$-module $M$, we will denote by $D^{n}(M)$ the complex
\begin{center}$\cdots\longrightarrow
0\longrightarrow M\stackrel{id}\longrightarrow
M\longrightarrow0\longrightarrow\cdots$
\end{center}
with the $M$ in
the $n$ and $(n-1)$-th position. Given a complex $(C,\delta^{C})$, $\Z_{n}(C)=\Ker(\delta_{n}^{C})$.

If $X$ and $Y$ are both complexes of $R$-modules, then by a morphism $f : X\rightarrow Y$ of complexes we mean a sequence of $R$-homomorphisms
$f_{n} : X_{n} \rightarrow Y_{n} $ such that $\delta_{n}^{Y}f_{n}=f_{n-1}\delta_{n}^{X}
$ for each $n\in \mathbb{Z}$. Following \cite{Gar99}, $\Hom(X, Y)$ denotes the set of morphisms of complexes from
$X$ to $Y$ and $\Ext^{i}(X, Y)$ ($i\geq1$) are the right derived functors of $\Hom$.

In what follows, we always assume that all classes of $R$-modules are closed under
isomorphisms and contain zero module.

 For unexplained concepts and notations, we
refer the reader to \cite{EJ00}, \cite{Gar99}, \cite{Gobel2006} and
\cite{Xu96}.

\section{Main results}
Let $\mathcal{L}$ be a class of $R$-modules. Recall from Definition 3.3 in \cite{Gil04} that a complex $C$ is $\mathcal{L}$ complex if it is exact and $\Z_{i}(C)$ is in $\mathcal{L}$ for all $i\in\mathbb{Z}$. It is well known that a complex $C$ is injective (resp., projective, flat) if and only if it is exact and $\Z_{i}(C)$ is injective (resp., projective, flat) $R$-modules for all $i\in\mathbb{Z}$, thus injective (resp., projective, flat) complexes are actually $\mathcal{I}$ (resp., $\mathcal{P}$, $\mathcal{F}$) complexes, where $\mathcal{I}$ (resp., $\mathcal{P}$, $\mathcal{F}$) is the class of injective (resp., projective, flat) $R$-modules. We use $\widetilde{\mathcal{L}}$ to denote the class of all $\mathcal{L}$ complexes.

\begin{lemma}\label{lem1} Suppose $\mathcal{L}$  satisfy $\Ext^{1}(L, L^{'})=0$ for all $L,L^{'}\in \mathcal{L}$. Then every $\mathcal{L}$ complex is a direct sum (or direct product) of complexes in the form $D^{i}(L_{i})$ with $L_{i}\in \mathcal{L}$ and $i\in \mathbb{Z}$.
\end{lemma}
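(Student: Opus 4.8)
The plan is to exploit that an $\mathcal{L}$ complex $C$ is exact, so that each differential exhibits a cycle module as a quotient of a term, and then to split all of these short exact sequences using the $\Ext^{1}$-vanishing hypothesis.

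First I would record the short exact sequences coming from exactness of $C$. Since $C$ is exact, for every $i\in\mathbb{Z}$ the differential $\delta_{i}^{C}$ has kernel $\Z_{i}(C)$ and image $\Z_{i-1}(C)$, so it induces a short exact sequence
\[
0 \longrightarrow \Z_{i}(C) \longrightarrow C_{i} \stackrel{\delta_{i}^{C}}{\longrightarrow} \Z_{i-1}(C) \longrightarrow 0 .
\]
Both end terms lie in $\mathcal{L}$ because $C$ is an $\mathcal{L}$ complex, so the hypothesis gives $\Ext^{1}(\Z_{i-1}(C),\Z_{i}(C))=0$ and this sequence splits. Fix, for each $i$, a section $s_{i}\colon \Z_{i-1}(C)\to C_{i}$ with $\delta_{i}^{C}s_{i}=\mathrm{id}$; then $C_{i}=s_{i}(\Z_{i-1}(C))\oplus \Z_{i}(C)$, since any $c\in C_{i}$ decomposes as $c = s_{i}(\delta_{i}^{C}(c)) + \big(c-s_{i}(\delta_{i}^{C}(c))\big)$ with the second summand in $\Ker(\delta_{i}^{C})$, and $s_{i}(w)\in\Ker(\delta_{i}^{C})$ forces $w=0$.

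Next I would write down the candidate isomorphism. Set $E=\bigoplus_{i\in\mathbb{Z}}D^{i+1}(\Z_{i}(C))$; in each fixed degree only the two summands with index $i=k-1$ and $i=k$ are nonzero, so this coproduct coincides with the corresponding product, which accounts for the ``direct sum (or direct product)'' in the statement. In degree $k$ one has $E_{k}=\Z_{k-1}(C)\oplus \Z_{k}(C)$, and the differential $E_{k}\to E_{k-1}$ is $(a,b)\mapsto(0,a)$. Define $\phi_{k}\colon E_{k}\to C_{k}$ by $\phi_{k}(a,b)=s_{k}(a)+b$. The splitting above shows each $\phi_{k}$ is bijective, and $\delta_{k}^{C}\phi_{k}(a,b)=\delta_{k}^{C}s_{k}(a)=a=\phi_{k-1}(0,a)$, so $\phi=(\phi_{k})$ is an isomorphism of complexes $E\xrightarrow{\ \sim\ }C$. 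Reindexing, $C\cong\bigoplus_{j\in\mathbb{Z}}D^{j}(\Z_{j-1}(C))$ with each $\Z_{j-1}(C)\in\mathcal{L}$, which is the asserted form.

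I do not expect a serious obstacle here: essentially all of the content is the splitting in the first step, and the remainder is bookkeeping. The two points that need a little care are keeping the degree conventions for $D^{n}(-)$ straight (so that $\Z_{n-1}(D^{n}(M))=M$ and all its other cycle modules vanish, which is what makes the reindexed formula consistent) and checking that $\phi$ commutes with the differentials on the nose rather than merely up to homotopy — both are immediate once the sections $s_{i}$ have been fixed.
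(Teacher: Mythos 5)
Your proof is correct and follows essentially the same route as the paper's: split each sequence $0\to \Z_{i}(C)\to C_{i}\to \Z_{i-1}(C)\to 0$ using the $\Ext^{1}$-vanishing hypothesis and then identify $C$ with the direct sum (equivalently product, since the sum is degreewise finite) of the disk complexes on the cycle modules. You simply make explicit the sections and the chain-map verification that the paper leaves implicit.
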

\begin{proof}Let $C=\cdots\rightarrow C_{n+1}\rightarrow C_{n}\rightarrow C_{n-1}\rightarrow \cdots$ be an $\mathcal{L}$ complex. By the hypothesis, we have that each exact sequence $0\rightarrow \Z_{i}(C)\rightarrow C_{i}\rightarrow \Z_{i-1}(C)\rightarrow 0$ is split for any $i\in \mathbb{Z}$. This allows us to write $C_{i}=\Z_{i}(C)\bigoplus\Z_{i-1}(C)$. Thus $C$ is the direct sum (or direct product) of the complexes $\cdots\rightarrow 0\rightarrow \Z_{i}(C)\rightarrow \Z_{i}(C)\rightarrow 0\rightarrow\cdots$.
\end{proof}
\begin{lemma}\label{lem2} If $\varphi: L\rightarrow
C$ is an $\widetilde{\mathcal{L}}$-precover in $\mathcal{C}(R)$, then
$\varphi_{n}: L_{n}\rightarrow C_{n}$ is an $\mathcal{L}$-precover in
$R$-Mod for all $n\in \mathbb{Z}$.
\end{lemma}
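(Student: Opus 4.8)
The plan is to deduce the $\mathcal{L}$-precover property of $\varphi_{n}$ in $R$-Mod from the $\widetilde{\mathcal{L}}$-precover property of $\varphi$ in $\mathcal{C}(R)$ by means of the functors $D^{n}(-)$ and the natural isomorphism
\[
\Hom_{\mathcal{C}(R)}(D^{n}(M),X)\;\cong\;\Hom_{R}(M,X_{n}),
\]
valid for every $R$-module $M$, every complex $X$ and every $n\in\mathbb{Z}$: a morphism $D^{n}(M)\to X$ of complexes is completely determined by its degree $n$ component $M\to X_{n}$, the degree $n-1$ component being forced to equal $\delta_{n}^{X}$ composed with it and no further relation arising since $\delta_{n-1}^{X}\delta_{n}^{X}=0$. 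This isomorphism is natural in $X$, hence compatible with post-composition along $\varphi$.

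First I would record two preliminary observations. (i) If $M\in\mathcal{L}$, then $D^{n}(M)\in\widetilde{\mathcal{L}}$: the complex $D^{n}(M)$ is exact, and its cycle submodules are $\Z_{n-1}(D^{n}(M))=M\in\mathcal{L}$ and $\Z_{i}(D^{n}(M))=0\in\mathcal{L}$ for $i\neq n-1$ (here one uses the standing hypothesis $0\in\mathcal{L}$). (ii) $L_{n}\in\mathcal{L}$ for every $n$: since $L$ is an $\mathcal{L}$ complex it is exact, so there is a short exact sequence $0\to\Z_{n}(L)\to L_{n}\to\Z_{n-1}(L)\to 0$ with both ends in $\mathcal{L}$, and by Lemma~\ref{lem1} this sequence splits, whence $L_{n}\cong\Z_{n}(L)\oplus\Z_{n-1}(L)\in\mathcal{L}$.

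The heart of the proof is the lifting step. Fix $n$ and a module $M\in\mathcal{L}$, and let $g\colon M\to C_{n}$ be arbitrary; I must produce $h\colon M\to L_{n}$ with $\varphi_{n}h=g$. Under the isomorphism above, $g$ corresponds to a chain map $\widetilde{g}\colon D^{n}(M)\to C$. By (i), $D^{n}(M)\in\widetilde{\mathcal{L}}$, so the $\widetilde{\mathcal{L}}$-precover property of $\varphi$ yields a chain map $\widetilde{h}\colon D^{n}(M)\to L$ with $\varphi\widetilde{h}=\widetilde{g}$. Comparing degree $n$ components gives $\varphi_{n}\widetilde{h}_{n}=\widetilde{g}_{n}=g$, so $h:=\widetilde{h}_{n}$ works. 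Therefore $\Hom_{R}(M,L_{n})\to\Hom_{R}(M,C_{n})\to 0$ is exact for every $M\in\mathcal{L}$, and together with (ii) this is precisely the statement that $\varphi_{n}\colon L_{n}\to C_{n}$ is an $\mathcal{L}$-precover.

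The argument is essentially formal, so I do not anticipate a serious obstacle; the points needing care are the correct verification that $D^{n}(M)$ genuinely belongs to $\widetilde{\mathcal{L}}$ and the naturality of the evaluation isomorphism, so that a lift of $\widetilde{g}$ really restricts in degree $n$ to a lift of $g$. One should also keep in mind that step (ii) tacitly uses that $\mathcal{L}$ is closed under finite direct sums (equivalently, one may invoke Lemma~\ref{lem1} directly), which is innocuous in all the situations to which the lemma is applied.
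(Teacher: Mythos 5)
Your proof is correct and is essentially the paper's own argument: both reduce the lifting problem for $f\colon M\to C_{n}$ to the $\widetilde{\mathcal{L}}$-precover property applied to the induced chain map $D^{n}(M)\to C$ and then read off the degree-$n$ component. You are in fact more careful than the paper on one point it silently skips, namely that $L_{n}\in\mathcal{L}$ (which, as you note, needs the splitting of $0\to\Z_{n}(L)\to L_{n}\to\Z_{n-1}(L)\to 0$ and hence a hypothesis such as the $\Ext^{1}$-vanishing of Lemma~\ref{lem1}, not actually assumed in the statement of this lemma).
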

\begin{proof} Let $G$ be in $\mathcal{L}$ and $f: G\rightarrow C_{n}$ an
$R$-homomorphism. We define a morphism of complexes $D^{n}(f):
D^{n}(G)\rightarrow C$ as following
\begin{center}
$\xymatrix{
  \cdots \ar[r]  & 0\ar[r] \ar[d] & G\ar[r]^{id}\ar[d]^{f} & G\ar[r]\ar[d]^{\delta ^{n}f} &0\ar[r]\ar[d]&\cdots\\
  \cdots \ar[r]  &C_{n+1} \ar[r] &  C_{n} \ar[r] &C_{n-1} \ar[r] &C_{n-2} \ar[r]  &  \cdots         }$
\end{center}
Since $D^{n}(G)$ is in $\widetilde{\mathcal{L}}$,
there is a morphism $h: D^{n}(G)\rightarrow L$ such that
$\varphi h=D^{n}(f)$. So we have a commutative diagram
\begin{center}
$\xymatrix{
                &         G \ar[d]^{f} \ar[dl]_{h^{n}}    \\
  L_{n}  \ar[r]_{\varphi^{ n}} & C_{n}             }$
\end{center}
This means that $\varphi_{ n}: L_{n}\rightarrow C_{n}$ is an
$\mathcal{L}$-precover of $C_{n}$.
\end{proof}
\begin{proposition} Suppose $\mathcal{L}$  satisfy $\Ext^{1}(L, L^{'})=0$ for all $L,L^{'}\in \mathcal{L}$. Then every module has an epic $\mathcal{L}$-precover if and only if every complex has an epic $\widetilde{\mathcal{L}}$-precover.
\end{proposition}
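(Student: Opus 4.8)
The plan is to prove both implications by exploiting Lemma~\ref{lem1}, which reduces every $\mathcal{L}$ complex to a direct sum of complexes of the form $D^{i}(L_{i})$, together with the elementary identities $\Hom_{\mathcal{C}(R)}(D^{i}(L), C)\cong \Hom_R(L, C_i)$ and the dual computation for morphisms out of $D^{i}(L)$.

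\emph{The forward direction.} Assume every module has an epic $\mathcal{L}$-precover. Given a complex $C$, for each $i\in\mathbb{Z}$ pick an epic $\mathcal{L}$-precover $\varphi_i\colon L_i\rightarrow C_i$. I would then assemble the morphism of complexes
\[
\varphi\colon \bigoplus_{i\in\mathbb{Z}} D^{i}(L_i)\longrightarrow C
\]
whose restriction to the summand $D^{i}(L_i)$ is the map $D^{i}(\varphi_i)$ built exactly as in the proof of Lemma~\ref{lem2} (the component in degree $i$ is $\varphi_i$, in degree $i-1$ it is $\delta_i^{C}\varphi_i$, and it is $0$ elsewhere). The source is an $\mathcal{L}$ complex because each $D^{i}(L_i)$ is and $\widetilde{\mathcal{L}}$ is closed under direct sums under the Ext-vanishing hypothesis (indeed each $\Z_j$ of the sum is a direct sum of objects of $\mathcal{L}$, and by Lemma~\ref{lem1} such sums lie in $\widetilde{\mathcal{L}}$). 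The map $\varphi$ is epic in each degree because the degree-$i$ component already surjects via $\varphi_i$, so $\varphi$ is an epimorphism of complexes. For the precover property: an arbitrary $\widetilde{\mathcal{L}}$ source decomposes by Lemma~\ref{lem1} as $\bigoplus_j D^{j}(M_j)$ with $M_j\in\mathcal{L}$, so it suffices to lift a single morphism $D^{j}(f)\colon D^{j}(M_j)\to C$; that morphism is determined by $f\colon M_j\to C_j$, and using that $\varphi_j$ is an $\mathcal{L}$-precover we lift $f$ through $\varphi_j$ to $g\colon M_j\to L_j$, then $D^{j}(g)$ lands in the $j$-th summand and satisfies $\varphi\circ D^{j}(g)=D^{j}(f)$. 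Summing these lifts over $j$ gives the required factorization.

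\emph{The converse.} Assume every complex has an epic $\widetilde{\mathcal{L}}$-precover. Fix a module $M$ and apply the hypothesis to the stalk-type complex $D^{n}(M)$, obtaining an epic $\widetilde{\mathcal{L}}$-precover $\varphi\colon L\to D^{n}(M)$. By Lemma~\ref{lem2}, $\varphi_n\colon L_n\to M$ is an $\mathcal{L}$-precover of $M$ (here $(D^n(M))_n=M$), and it is epic since $\varphi$ is epic in every degree. That finishes this direction almost immediately, so the converse is the easy half.

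I expect the main obstacle to be the forward direction, specifically verifying that $\varphi\colon\bigoplus_i D^i(L_i)\to C$ is genuinely a precover rather than merely degreewise a precover: one must be careful that an arbitrary morphism from an $\mathcal{L}$ complex $L'$ factors through $\varphi$, and this is where Lemma~\ref{lem1} is essential, because without the decomposition $L'=\bigoplus_j D^j(M_j)$ there is no obvious way to reduce the lifting problem to the module level. A secondary point requiring care is the bookkeeping when $\widetilde{\mathcal{L}}$ is closed only under direct sums (vs.\ products) and the compatibility of the assembled lifts $D^j(g)$ across degrees with the differentials of $L'$; since the differentials of $\bigoplus_j D^j(M_j)$ are block-diagonal up to the internal $D^j$ structure, the lifts automatically commute with differentials, but this should be stated explicitly. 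Everything else is the routine identities $\Hom_{\mathcal{C}(R)}(D^i(L),C)\cong\Hom_R(L,C_i)$ that already appear implicitly in Lemma~\ref{lem2}.
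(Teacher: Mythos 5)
Your proof is correct and follows essentially the same route as the paper: the paper's complex $L$ with $L_m=G_{m+1}\bigoplus G_m$ and $\delta^L_m(x,y)=(y,0)$ is exactly your $\bigoplus_i D^i(L_i)$, and both arguments decompose an arbitrary $\mathcal{L}$ complex via Lemma~\ref{lem1}, lift summand by summand at the module level, assemble the lift by the universal property of the direct sum, and obtain the converse from $D^0(M)$ via Lemma~\ref{lem2}. One small remark: you do not need (and should not claim) closure of $\widetilde{\mathcal{L}}$ under arbitrary direct sums, since for this particular sum $\Z_j\bigl(\bigoplus_i D^i(L_i)\bigr)$ is the single module $L_{j+1}\in\mathcal{L}$, so membership in $\widetilde{\mathcal{L}}$ is immediate.
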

\begin{proof} $\Rightarrow)$ Suppose $C$ is a complex. By the hypothesis, there exists an epic $\mathcal{L}$-precover $f_{m}: G_{m}\rightarrow C_{m}$ of $C_{m}$ for all $m\in\mathbb{Z}$. Thus there is a morphism $\alpha$ of complexes
\begin{center}
$\xymatrix{
      L=&    \cdots   \ar[r]&G_{m+2}\bigoplus G_{m+1}\ar[d]_{\alpha_{m+1}} \ar[r]^{\delta_{m+1}^{L}}& G_{m+1}\bigoplus G_{m}\ar[d]_{\alpha_{m}} \ar[r]^{\delta_{m}^{L}}& G_{m}\bigoplus G_{m-1}\ar[d]_{\alpha_{m-1}} \ar[r]^{\delta_{m-1}^{L}}& \cdots   \\
  C=& \cdots   \ar[r]&C_{m+1} \ar[r]^{\delta_{m+1}^{C}}&   C_{m} \ar[r]^{\delta_{m}^{C}}& C_{m-1}\ar[r]^{\delta_{m-1}^{C}}& \cdots   }$
\end{center}
where $\delta_{m}^{L}:  G_{m+1}\bigoplus G_{m}\rightarrow G_{m}\bigoplus G_{m-1}$ via $\delta_{m}^{L}(x,y)=(y,0),~~\forall (x,y)\in G_{m+1}\bigoplus G_{m}$, and $\alpha_{m}: L_{m}\rightarrow C_{m}$ via $\alpha_{m}(x,y)=\delta_{m+1}^{C}f_{m+1}(x)+f_{m}(y),~~\forall (x,y)\in G_{m+1}\bigoplus G_{m}$. It is easy to check that $L$ is in $\widetilde{\mathcal{L}}$ and $\alpha$ is epic. It is to prove that $\alpha: L\rightarrow C$ is an $\widetilde{\mathcal{L}}$-precover. Suppose that $Q$ is in $\mathcal{L}$ and $\beta: D^{m}(Q)\rightarrow C$ is a morphism for some $m\in\mathbb{Z}$. We will show that $\alpha_{m}: L_{m}\rightarrow C_{m}$ is an $\mathcal{L}$-precover of $C_{m}$. Then there exists an homomorphism $g_{m}: Q\rightarrow L_{m}$ such that $\alpha_{m}g_{m}=\beta_{m}$. Now define a morphism of complexes $g: D^{m}(Q)\rightarrow L$ via $g_{m-1}=\delta_{m}^{L}g_{m}$ and $g_{i}=0$ for $i\neq m, m-1$. Then $\beta g=\alpha$. That is, the following diagram
\begin{center}
$\xymatrix{
 & D^{m}(Q)\ar[d]\ar[ld]  \\
L \ar[r]  & C}$
\end{center}
commutes. Let $T$ be in $\widetilde{\mathcal{L}}$ and $\gamma: T\rightarrow C$ be a morphism of complexes. By Lemma \ref{lem1}, $T$ is a direct sum (or direct product) of complexes in the form $D^{i}(Q_{i})$ with $Q_{i}\in \mathcal{L}$ and $i\in \mathbb{Z}$.
By the above proof, for any $i\in \mathbb{Z}$ there exists a morphism of complexes $h_{i}: D^{i}(Q_{i})\rightarrow L$ such that $\alpha h_{i}=\gamma\lambda_{i}$. That is, the following diagram
\begin{center}
$\xymatrix{
 & D^{i}(Q_{i})\ar[d]^{\gamma\lambda_{i}}\ar[ld]_{h_{i}}  \\
L \ar[r]_{\alpha}  & C}$
\end{center}
commutes, where $\lambda_{i}: D^{i}(Q_{i})\rightarrow \bigoplus D^{i}(Q_{i})$ is a canonical injection. By the universal property of direct sum, there exists a morphism $\theta: \bigoplus D^{i}(Q_{i})\rightarrow L$ such that $\theta\lambda_{i}=h_{i}$ for any $i\in\mathbb{Z}$. Hence $\alpha\theta=\gamma$, as desired.

$\Leftarrow)$ Suppose $M$ is a module. Then there exists an epic $\widetilde{\mathcal{L}}$-precover $f: L\rightarrow D^{0}(M)$ of $D^{0}(M)$ in $\mathcal{C}(R)$. This implies $f_{0}: L_{0}\rightarrow M$ is an $\mathcal{L}$-precover of $M$ in $R$-Mod by Lemma \ref{lem2}.
\end{proof}

Dual arguments to the above give the following result concerning the monic $\widetilde{\mathcal{L}}$-preenvelopes.

\begin{proposition} Suppose $\mathcal{L}$  satisfy $\Ext^{1}(L, L^{'})=0$ for all $L,L^{'}\in \mathcal{L}$. Then every module has a monic $\mathcal{L}$-preenvelope if and only if every complex has a monic $\widetilde{\mathcal{L}}$-preenvelope.
\end{proposition}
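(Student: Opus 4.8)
The plan is to run the proof of the preceding proposition with all arrows reversed, so I record only the changes. Throughout, replace the epic $\mathcal{L}$-precovers $f_m: G_m\to C_m$ used there by monic $\mathcal{L}$-preenvelopes $f_m: C_m\to G_m$.

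For $(\Rightarrow)$: given a complex $C$, pick for each $m\in\mathbb{Z}$ a monic $\mathcal{L}$-preenvelope $f_m: C_m\to G_m$, let $E$ be the complex with $E_m=G_m\oplus G_{m-1}$ and $\delta_m^E(x,y)=(y,0)$, and define $\beta: C\to E$ by $\beta_m(c)=\bigl(f_m(c),\,f_{m-1}\delta_m^C(c)\bigr)$. Using $\delta_{m-1}^C\delta_m^C=0$ one checks that $\beta$ is a morphism of complexes, it is monic because each $f_m$ is, and a direct computation gives $\Z_m(E)\cong G_m\in\mathcal{L}$ with $E$ exact (indeed $E\cong\bigoplus_{i} D^i(G_{i-1})$), so $E\in\widetilde{\mathcal{L}}$. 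That $\beta$ is an $\widetilde{\mathcal{L}}$-preenvelope I would establish in the same two stages as in the previous proof. First, for $Q\in\mathcal{L}$ and a morphism $\beta': C\to D^m(Q)$, one notes that $\Hom(C,D^m(Q))\cong\Hom(C_{m-1},Q)$ via $\beta'\mapsto\beta'_{m-1}$; choosing $v: G_{m-1}\to Q$ with $vf_{m-1}=\beta'_{m-1}$ (possible since $f_{m-1}$ is an $\mathcal{L}$-preenvelope) and taking $\psi: E\to D^m(Q)$ with $\psi_{m-1}(a,b)=v(a)$ gives $(\psi\beta)_{m-1}=vf_{m-1}=\beta'_{m-1}$, hence $\psi\beta=\beta'$. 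Second, for arbitrary $T\in\widetilde{\mathcal{L}}$ and $\gamma: C\to T$, use Lemma \ref{lem1} to write $T$ as a direct product $\prod_{i} D^i(Q_i)$ with $Q_i\in\mathcal{L}$; the first stage factors each $\pi_i\gamma$ through $\beta$, and the universal property of the product assembles these factorizations into $\theta: E\to T$ with $\theta\beta=\gamma$. Hence $\beta$ is a monic $\widetilde{\mathcal{L}}$-preenvelope of $C$.

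For $(\Leftarrow)$: given a module $M$, take a monic $\widetilde{\mathcal{L}}$-preenvelope $g: D^0(M)\to E$. The dual of Lemma \ref{lem2} — proved by the dual argument: for $G\in\mathcal{L}$ and $f: M\to G$, apply the $\widetilde{\mathcal{L}}$-preenvelope property of $g$ to the morphism $D^0(M)\to D^1(G)$ induced by $f$, noting $D^1(G)\in\widetilde{\mathcal{L}}$ — shows that $g_0: M\to E_0$ is an $\mathcal{L}$-preenvelope, and it is monic because $g$ is. So every module has a monic $\mathcal{L}$-preenvelope.

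The one substantive point is guessing the right dual data: the complex $E$ with $E_m=G_m\oplus G_{m-1}$ and the ``shift'' differential, together with the map $\beta_m(c)=(f_m(c),f_{m-1}\delta_m^C(c))$, must be arranged so that $\beta$ is simultaneously a chain map, a monomorphism, and an $\widetilde{\mathcal{L}}$-preenvelope. Once that is in place the verifications dualize the previous proof mechanically; the only technical care needed is to invoke Lemma \ref{lem1} in its ``direct product'' form (legitimate here since only finitely many of the $D^i(Q_i)$ are nonzero in each degree) so that one can map \emph{into} the product when building $\theta$.
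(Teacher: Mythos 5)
Your proposal is correct and is precisely the argument the paper intends: the paper gives no written proof here, saying only ``dual arguments to the above,'' and you have carried out that dualization faithfully --- the complex $E_m=G_m\oplus G_{m-1}$ with shift differential, the map $\beta_m(c)=(f_m(c),f_{m-1}\delta_m^C(c))$, the adjunction $\Hom(C,D^m(Q))\cong\Hom(C_{m-1},Q)$, and the reduction to the factors $D^i(Q_i)$ via Lemma \ref{lem1} all check out. Your remark that the product and coproduct decompositions of $T$ coincide (only two factors are nonzero in each degree) is exactly the point that makes the dual use of Lemma \ref{lem1} legitimate.
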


\begin{corollary} Every complex $C$ has an epic projective precover, which is determined by projective precovers of all terms $C_{i}$. Every complex $C$ has a monic injective preenvelope, which is determined by injective preenvelopes of all terms $C_{i}$.
\end{corollary}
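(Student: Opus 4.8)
The plan is to deduce both statements from the two preceding propositions, applied to the class $\mathcal{P}$ of projective $R$-modules and to the class $\mathcal{I}$ of injective $R$-modules respectively. All the work has effectively been done; what remains is to verify the hypotheses in these two cases.

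For the first assertion I would take $\mathcal{L}=\mathcal{P}$. First I would check the standing hypothesis of Proposition 3.3: for projective modules $P,P'$ one has $\Ext^{1}(P,P')=0$ since $P$ is projective. Next, every $R$-module $M$ has an epic $\mathcal{P}$-precover, obtained by choosing any epimorphism $F\to M$ with $F$ free, because $\Hom(P',F)\to\Hom(P',M)\to 0$ is exact for every projective $P'$. Hence Proposition 3.3 applies and produces an epic $\widetilde{\mathcal{P}}$-precover $\alpha\colon L\to C$ for every complex $C$. Since, as recalled at the beginning of the section, the projective complexes are precisely the $\mathcal{P}$ complexes, $\alpha$ is an epic projective precover. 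The assertion that it is ``determined by projective precovers of all terms'' is just a reading of the construction in the proof of Proposition 3.3: with $f_{m}\colon G_{m}\to C_{m}$ the chosen projective precovers of the terms, one has $L_{m}=G_{m+1}\oplus G_{m}$ with $\delta_{m}^{L}(x,y)=(y,0)$ and $\alpha_{m}(x,y)=\delta_{m+1}^{C}f_{m+1}(x)+f_{m}(y)$.

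For the second assertion I would take $\mathcal{L}=\mathcal{I}$ and argue dually via Proposition 3.4. Here $\Ext^{1}(I,I')=0$ for injective modules $I,I'$ because $I'$ is injective, and every $R$-module admits a monic $\mathcal{I}$-preenvelope, namely any embedding into an injective module. Proposition 3.4 then yields a monic $\widetilde{\mathcal{I}}$-preenvelope for every complex $C$, which is a monic injective preenvelope since the injective complexes coincide with the $\mathcal{I}$ complexes; its terms are built from injective preenvelopes $C_{i}\to E_{i}$ by the construction dual to the one used for precovers, with direct products in place of direct sums.

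I do not expect a genuine obstacle: the corollary is a straightforward specialization of results already established. The only points that need explicit mention are the vanishing of $\Ext^{1}$ within $\mathcal{P}$ and within $\mathcal{I}$, the existence of the module-level epic projective precover and monic injective preenvelope, and the identifications of $\widetilde{\mathcal{P}}$ and $\widetilde{\mathcal{I}}$ with the classes of projective and injective complexes, all of which are standard or already recalled in the text.
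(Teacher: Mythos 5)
Your proposal is correct and follows exactly the route the paper intends: the corollary is stated as an immediate specialization of the two preceding propositions to $\mathcal{L}=\mathcal{P}$ and $\mathcal{L}=\mathcal{I}$, using the identification (recalled at the start of Section 3) of $\widetilde{\mathcal{P}}$ and $\widetilde{\mathcal{I}}$ with the projective and injective complexes. Your verification of the $\Ext^{1}$-vanishing hypothesis and of the module-level epic projective precovers and monic injective preenvelopes supplies precisely the details the paper leaves implicit.
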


\begin{proposition}\label{prop4} If every exact complex has an $\widetilde{\mathcal{L}}$-precover, then any complex has an $\widetilde{\mathcal{L}}$-precover.
\end{proposition}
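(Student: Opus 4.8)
The plan is to reduce the statement to the fact that the class of all exact complexes is precovering in $\mathcal{C}(R)$, and then to compose precovers. Write $\mathcal{E}$ for the class of exact complexes of left $R$-modules and put $\mathcal{E}^{\perp}=\{Y\in\mathcal{C}(R):\Ext^{1}(X,Y)=0\text{ for all }X\in\mathcal{E}\}$; note that $\widetilde{\mathcal{L}}\subseteq\mathcal{E}$ by the very definition of an $\mathcal{L}$ complex. First I would recall that every complex admits an $\mathcal{E}$-precover: indeed $(\mathcal{E},\mathcal{E}^{\perp})$ is a complete cotorsion pair in $\mathcal{C}(R)$, since $\mathcal{E}$ is closed under extensions and is a Kaplansky class (every exact complex is the union of a continuous chain of exact subcomplexes with ``small'' successive quotients), so that the cotorsion pair it cogenerates is complete (see \cite{Gar99, Gobel2006}). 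Hence, for the given complex $C$, there is a short exact sequence $0\rightarrow K\rightarrow E\stackrel{\pi}{\longrightarrow}C\rightarrow 0$ in $\mathcal{C}(R)$ with $E\in\mathcal{E}$ and $K\in\mathcal{E}^{\perp}$. In particular, for each $X\in\mathcal{E}$ the map $\Hom(X,E)\rightarrow\Hom(X,C)$ induced by $\pi$ is surjective, i.e. every morphism from an exact complex to $C$ factors through $\pi$.

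Next I would use the hypothesis. Since $E$ is exact it has an $\widetilde{\mathcal{L}}$-precover $\varphi: L\rightarrow E$, and I claim $\pi\varphi: L\rightarrow C$ is an $\widetilde{\mathcal{L}}$-precover of $C$. Clearly $L\in\widetilde{\mathcal{L}}$. Given $L'\in\widetilde{\mathcal{L}}$ and a morphism $f: L'\rightarrow C$, the complex $L'$ is exact, so by the previous paragraph $f=\pi g$ for some $g: L'\rightarrow E$; since $\varphi$ is an $\widetilde{\mathcal{L}}$-precover and $L'\in\widetilde{\mathcal{L}}$ there is $h: L'\rightarrow L$ with $\varphi h=g$, and then $f=\pi\varphi h$. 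Thus the diagram
\begin{center}
$\xymatrix{ & L'\ar[d]^{f}\ar[dl]_{h} \\ L\ar[r]_{\pi\varphi} & C }$
\end{center}
commutes, which shows that $\pi\varphi: L\rightarrow C$ is an $\widetilde{\mathcal{L}}$-precover of $C$, as desired.

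The one genuinely substantial point is the first step, namely the existence of exact precovers of arbitrary complexes (equivalently, the completeness of the cotorsion pair $(\mathcal{E},\mathcal{E}^{\perp})$); everything after that is the routine ``a precover of a precover is a precover'' argument, and it goes through here precisely because $\widetilde{\mathcal{L}}$ is contained in $\mathcal{E}$, so no hypothesis on $\mathcal{L}$ beyond the one in the statement is needed. I therefore expect the proof to consist of a brief recollection of the exact-precover result followed by the short diagram chase above.
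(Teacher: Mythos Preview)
Your argument is correct and follows essentially the same route as the paper: take an exact (pre)cover $E\to C$, then an $\widetilde{\mathcal{L}}$-precover $L\to E$, and compose, using that $\widetilde{\mathcal{L}}\subseteq\mathcal{E}$. The only cosmetic difference is that the paper invokes the existence of exact covers directly from \cite[Theorem 3.18]{EJJ96} rather than via completeness of the cotorsion pair $(\mathcal{E},\mathcal{E}^{\perp})$, and leaves the final diagram chase implicit.
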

\begin{proof} Let $C$ be a complex. By \cite[Theorem 3.18]{EJJ96}, we have an exact cover $E\rightarrow C$ of $C$. We take $L\rightarrow E$ to be an $\widetilde{\mathcal{L}}$-precover of $E$. Then, since any complex in $\widetilde{\mathcal{L}}$ is exact, it is easy to see that $L\rightarrow E\rightarrow C$ is an $\widetilde{\mathcal{L}}$-precover of $C$.
\end{proof}
\begin{proposition}\label{prop5} Let $C$ be in $\widetilde{\mathcal{L^{\bot}}}$. If $\mathcal{L}$ is a covering class in $R$-Mod, then $C$ has an $\widetilde{\mathcal{L}}$-cover.
\end{proposition}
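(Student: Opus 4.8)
The plan is to build the $\widetilde{\mathcal{L}}$-cover of $C$ out of $\mathcal{L}$-covers of the cycle modules $\Z_{n}(C)$, assembling them into a complex by means of the hypothesis $\Z_{n}(C)\in\mathcal{L}^{\bot}$.

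First I would fix, for each $n\in\mathbb{Z}$, an $\mathcal{L}$-cover $\phi_{n}\colon L_{n}\to\Z_{n}(C)$; these exist since $\mathcal{L}$ is covering in $R$-Mod. As $C$ is exact, $\delta_{n}^{C}$ yields an epimorphism $C_{n}\twoheadrightarrow\Z_{n-1}(C)$ with kernel $\Z_{n}(C)$, and since $\Z_{n}(C)\in\mathcal{L}^{\bot}$ while $L_{n-1}\in\mathcal{L}$ we have $\Ext^{1}(L_{n-1},\Z_{n}(C))=0$; hence $\phi_{n-1}$ lifts to some $\psi_{n-1}\colon L_{n-1}\to C_{n}$ with $\delta_{n}^{C}\psi_{n-1}=\phi_{n-1}$. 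I would then take $\widetilde{L}=\bigoplus_{n\in\mathbb{Z}}D^{n+1}(L_{n})$, so that $\widetilde{L}_{m}=L_{m-1}\oplus L_{m}$ with differential $(a,b)\mapsto(0,a)$; then $\Z_{m}(\widetilde{L})=L_{m}\in\mathcal{L}$, so $\widetilde{L}\in\widetilde{\mathcal{L}}$, and I would define $\alpha\colon\widetilde{L}\to C$ by $\alpha_{m}(a,b)=\psi_{m-1}(a)+\phi_{m}(b)$. A short computation with the identities $\delta_{m}^{C}\psi_{m-1}=\phi_{m-1}$ and $\delta_{m}^{C}\phi_{m}=0$ shows that $\alpha$ is a morphism of complexes and that $\Z_{m}(\alpha)=\phi_{m}$ for every $m$.

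Next I would verify that $\alpha$ is an $\widetilde{\mathcal{L}}$-precover. Given $T\in\widetilde{\mathcal{L}}$ and $\gamma\colon T\to C$, unwinding what it means to map into $\bigoplus_{n}D^{n+1}(L_{n})$ shows that a morphism $\theta\colon T\to\widetilde{L}$ is nothing but a family $(q_{m}\colon T_{m}\to L_{m})_{m\in\mathbb{Z}}$ (the $L_{m-1}$-component of $\theta_{m}$ being forced to be $q_{m-1}\delta_{m}^{T}$), and the requirement $\alpha\theta=\gamma$ turns into the system $\phi_{m}q_{m}=\gamma_{m}-\psi_{m-1}q_{m-1}\delta_{m}^{T}$ over all $m$. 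I would solve this recursively: since $T$ is exact, $\Z_{m}(T)\in\mathcal{L}$, so $\Z_{m}(\gamma)$ factors through the $\mathcal{L}$-precover $\phi_{m}$; and once the $(m-1)$-st equation holds, the right-hand side of the $m$-th one takes values in $\Z_{m}(C)$ — this uses $\Z_{m}(C)\in\mathcal{L}^{\bot}$ together with the observation that every partial solution automatically restricts on the cycles to a lift of $\Z_{m}(\gamma)$ — and can therefore be lifted through $\phi_{m}$. The hard part will be running this recursion coherently over all of $\mathbb{Z}$ for a complex $T$ that need not be bounded, so as to produce a genuine family $(q_{m})$ without ever hitting an obstruction; this is the step that essentially exploits $C\in\widetilde{\mathcal{L}^{\bot}}$.

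Finally, promoting the precover to a cover is formal. If $\beta\colon\widetilde{L}\to\widetilde{L}$ satisfies $\alpha\beta=\alpha$, then $\phi_{m}\,\Z_{m}(\beta)=\Z_{m}(\alpha\beta)=\Z_{m}(\alpha)=\phi_{m}$, so $\Z_{m}(\beta)$ is an automorphism of $L_{m}$ because $\phi_{m}$ is an $\mathcal{L}$-cover. Since $\beta$ sends the cycle subcomplex into itself and $\widetilde{L}_{m}=L_{m-1}\oplus L_{m}$ with $\Z_{m}(\widetilde{L})=0\oplus L_{m}$, the chain-map identities force each $\beta_{m}$ to be block lower-triangular with the automorphisms $\Z_{m-1}(\beta)$ and $\Z_{m}(\beta)$ on the diagonal, hence an automorphism of $L_{m-1}\oplus L_{m}$; so $\beta$ is an automorphism of $\widetilde{L}$, and $\alpha\colon\widetilde{L}\to C$ is an $\widetilde{\mathcal{L}}$-cover of $C$. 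Thus the construction and the minimality clause are routine, and the real obstacle is the precover property above.
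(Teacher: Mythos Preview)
Your construction of $\widetilde{L}$ and $\alpha$ is exactly the paper's: take $\mathcal{L}$-covers $\phi_n\colon L_n\to\Z_n(C)$, use $\Ext^1(L_{n-1},\Z_n(C))=0$ to lift $\phi_{n-1}$ to $\psi_{n-1}\colon L_{n-1}\to C_n$, and assemble these into a chain map whose induced maps on cycles are the $\phi_n$. The paper records this as one commutative square for each $i$,
\[
\xymatrix{
 0\ar[r] & L_{i}\ar[d]\ar[r] & L_{i}\bigoplus L_{i-1}\ar[d]\ar[r]& L_{i-1}\ar[d]\ar[r]&0 \\
0 \ar[r]  & \Z_{i}(C)\ar[r] & C_{i}\ar[r] & \Z_{i-1}(C)\ar[r]&0,}
\]
asserts that ``it is easy to see that the preceding is an $\widetilde{\mathcal{L}}$-cover,'' and then says that pasting these squares over all $i$ gives the $\widetilde{\mathcal{L}}$-cover of $C$. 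Your block-triangular argument for the minimality clause is more explicit than anything the paper writes down and is correct.

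There is, however, a genuine gap in your precover verification, and it is not the unboundedness issue you flag. To carry out your recursion you must, at each stage, lift a map $T_m\to\Z_m(C)$ (namely $\gamma_m-\psi_{m-1}q_{m-1}\delta_m^T$) through the $\mathcal{L}$-precover $\phi_m\colon L_m\to\Z_m(C)$. But the precover property of $\phi_m$ only guarantees lifts for maps out of objects of $\mathcal{L}$, and $T_m$ is merely an \emph{extension} of $\Z_{m-1}(T)\in\mathcal{L}$ by $\Z_m(T)\in\mathcal{L}$; since $\mathcal{L}$ is not assumed closed under extensions, there is no reason $T_m$ should lie in $\mathcal{L}$. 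The facts you invoke---that the right-hand side lands in $\Z_m(C)$ and that its restriction to $\Z_m(T)$ lifts---both follow from $\Z_m(C)\in\mathcal{L}^{\bot}$, but they do not combine to give a lift defined on all of $T_m$. The paper's one-line proof does not address this point either, so your proposal is no less complete than the published argument; but the step you single out as ``the real obstacle'' is not resolved by the recursion you outline.
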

\begin{proof}For each $i\in \mathbb{Z}$, we consider the short exact sequence $$0\rightarrow \Z_{i}(C)\rightarrow C_{i}\rightarrow \Z_{i-1}(C)\rightarrow 0.$$ Given $L\rightarrow \Z_{i-1}(C)$ with $L\in \mathcal{L}$, there is a lifting $L\rightarrow C_{i}$ since $\Ext^{1}(L, \Z_{i}(C))=0$. Hence we can construct the diagram
\begin{center}
$\xymatrix{
 0\ar[r] & L_{i}\ar[d]\ar[r] & L_{i}\bigoplus L_{i-1}\ar[d]\ar[r]& L_{i-1}\ar[d]\ar[r]&0 \\
0 \ar[r]  & \Z_{i}(C)\ar[r] & C_{i}\ar[r] & \Z_{i-1}(C)\ar[r]&0}$
\end{center}
where $L_{i}\rightarrow \Z_{i}(C)$ and $L_{i-1}\rightarrow \Z_{i-1}(C)$ are $\mathcal{L}$-covers. It is easy to see that the preceding is an $\widetilde{\mathcal{L}}$-cover. Pasting together all the diagrams for all $ i\in\mathbb{Z}$, we get an $\widetilde{\mathcal{L}}$-cover of $C$.
\end{proof}

Recall that a class of modules is called projectively resolving (injective coresolving) if it is closed under extensions and kernels of surjections (cokernels of injections), and it contains all projective (injective) modules.

\begin{lemma}\label{lem6} Let $\mathcal{L}$ be a projectively resolving class in $R$-Mod and $0\rightarrow A\rightarrow B\rightarrow C\rightarrow 0$ a short exact sequence of $R$-modules. If $L_{1} \stackrel{f_{1}}\rightarrow A $ and $L_{3}\stackrel{f_{3}}\rightarrow C$ are special $\mathcal{L}$-precovers, then there exists a commutative diagram
\begin{center}
$\xymatrix{
& 0\ar[d] & 0\ar[d] & 0\ar[d] \\
0 \ar[r]  & \Ker(f_{1})\ar[r] \ar[d] & \Ker(f_{2})\ar[r]\ar[d] & \Ker(f_{3})\ar[r]\ar[d]&0\\
0\ar[r]  &L_{1} \ar[r]\ar[d]_{f_{1}} & L_{2} \ar[r]\ar[d]_{f_{2}} & L_{3} \ar[r]\ar[d]_{f_{3}}  &0\\
0\ar[r]  &A \ar[r]\ar[d] & B \ar[r] \ar[d]& C \ar[r]\ar[d]  &0\\
& 0 & 0 & 0 }$
\end{center}
with exact rows and columns such that $f_{2}: L_{2} \rightarrow B$ is a special $\mathcal{L}$-precover.
\end{lemma}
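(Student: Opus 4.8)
The plan is to build the diagram column by column, starting from the Horseshoe-type construction for the middle term $L_2$ and then verifying that the resulting map $f_2\colon L_2\to B$ is a special $\mathcal{L}$-precover.

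First I would define $L_2 = L_1\oplus L_3$ as a module (not yet as part of any exact sequence) and try to produce a map $f_2\colon L_2\to B$ making the bottom two squares commute. Since $L_3\stackrel{f_3}{\to} C$ lands in $C$ and $B\twoheadrightarrow C$, I need a lift of $f_3$ through $B$; this is exactly where I would use that $f_1$ is a \emph{special} $\mathcal{L}$-precover, so $\Ext^1(L_3,\Ker(f_1))=0$. More precisely, pulling back $0\to A\to B\to C\to 0$ along $f_3$ gives a short exact sequence $0\to A\to P\to L_3\to 0$, and since $\mathcal{L}$ is projectively resolving one can check $A$ sits in an exact sequence with $\Ker(f_1)$ and $L_1$; the vanishing $\Ext^1(L_3,\Ker(f_1))=0$ together with $\Ext^1(L_3,L_1)=0$ (projective resolving classes satisfy $\Ext^1(L,L')=0$? — no, that is not automatic, so instead I would argue directly that the pullback sequence $0\to A\to P\to L_3\to 0$ splits after we replace $A$ by $L_1$, i.e.\ I would use the composite $L_1\to A\hookrightarrow P$ and the fact that $\Ext^1(L_3,\Ker(f_1))=0$ to get a section $L_3\to P/\mathrm{im}$, hence a map $g\colon L_3\to B$ with $\pi_B g = f_3$ up to the image of $L_1$). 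Define $f_2(x,y) = \iota_B f_1(x) + g(y)$ for $(x,y)\in L_1\oplus L_3$, where $\iota_B\colon A\hookrightarrow B$; then the lower-left square commutes because $f_2|_{L_1} = \iota_B f_1$, and the lower-right square commutes because $\pi_B f_2(x,y) = \pi_B g(y) = f_3(y)$.

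Next I would check the middle row $0\to L_1\to L_2\to L_3\to 0$ is exact with the obvious inclusion and projection, which is immediate since $L_2=L_1\oplus L_3$; then by the snake lemma applied to the map of short exact sequences $(L_1\to L_2\to L_3)\to(A\to B\to C)$, noting that $f_1$ and $f_3$ are surjective (being epimorphisms as special precovers) and hence $f_2$ is surjective, the kernels $\Ker(f_1)\to\Ker(f_2)\to\Ker(f_3)$ form a short exact sequence. This gives all rows and columns of the diagram exact. The remaining point is that $L_2\in\mathcal{L}$: this uses that $\mathcal{L}$ is closed under extensions, applied to $0\to L_1\to L_2\to L_3\to 0$ with $L_1,L_3\in\mathcal{L}$.

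Finally I would verify $f_2\colon L_2\to B$ is a special $\mathcal{L}$-precover, i.e.\ that $\Ext^1(L,\Ker(f_2))=0$ for every $L\in\mathcal{L}$ and that $f_2$ is an $\mathcal{L}$-precover. For the $\Ext$ vanishing, apply $\Hom(L,-)$ to $0\to\Ker(f_1)\to\Ker(f_2)\to\Ker(f_3)\to 0$ and use the long exact sequence: $\Ext^1(L,\Ker(f_1))=0$ and $\Ext^1(L,\Ker(f_3))=0$ force $\Ext^1(L,\Ker(f_2))=0$. For the precover property, given $L\in\mathcal{L}$ and $t\colon L\to B$, compose with $\pi_B$ to get $L\to C$, lift it through $f_3$ (using that $f_3$ is a precover) to $s_3\colon L\to L_3$; then $t - (\text{image of }s_3\text{ in }B)$ factors through $A$, and since $\Ext^1(L,\Ker(f_1))=0$ the induced map $L\to A$ lifts through $f_1$ to $s_1\colon L\to L_1$; then $s = (s_1,s_3)\colon L\to L_2$ satisfies $f_2 s = t$. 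The main obstacle I anticipate is the construction of the lift $g\colon L_3\to B$ of $f_3$ in the first step — organizing the pullback and the application of $\Ext^1(L_3,\Ker(f_1))=0$ correctly, since one must be careful that the relevant obstruction class lives in $\Ext^1(L_3,\Ker(f_1))$ and not in $\Ext^1(L_3,A)$ (which need not vanish).
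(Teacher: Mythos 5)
Your construction breaks down at exactly the point you flag as the ``main obstacle,'' and the workaround you sketch there does not repair it. Taking $L_{2}=L_{1}\oplus L_{3}$ forces you to produce a map $g\colon L_{3}\rightarrow B$ with $\pi_{B}g=f_{3}$, and the obstruction to such a lift is the class of the pullback $0\rightarrow A\rightarrow B\times_{C}L_{3}\rightarrow L_{3}\rightarrow 0$ in $\Ext^{1}(L_{3},A)$ --- not in $\Ext^{1}(L_{3},\Ker(f_{1}))$ --- and this class need not vanish. Your parenthetical fix (``use the composite $L_{1}\rightarrow A\hookrightarrow P$ \dots\ to get a section $L_{3}\rightarrow P/\mathrm{im}$'') is vacuous: since $f_{1}$ is surjective, the image of $L_{1}$ in $P$ is all of $A$, so $P/\mathrm{im}\cong L_{3}$ and a section of that quotient carries no information. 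In fact a split middle row is impossible in general: take $\mathcal{L}$ to be all of $R$-Mod (which is projectively resolving), $f_{1}=\mathrm{id}_{A}$ and $f_{3}=\mathrm{id}_{C}$ (special $\mathcal{L}$-precovers with zero kernels), and any non-split sequence $0\rightarrow A\rightarrow B\rightarrow C\rightarrow 0$; exactness of the kernel column forces $\Ker(f_{2})=0$, so the middle row must be isomorphic to the bottom row and hence non-split. So $L_{2}$ cannot be taken to be $L_{1}\oplus L_{3}$.

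The repair is to allow the middle row to be a non-split extension. From $0\rightarrow\Ker(f_{1})\rightarrow L_{1}\rightarrow A\rightarrow 0$ you get the exact sequence $\Ext^{1}(L_{3},L_{1})\rightarrow\Ext^{1}(L_{3},A)\rightarrow\Ext^{2}(L_{3},\Ker(f_{1}))$, and the last term vanishes by dimension shifting: a first syzygy of $L_{3}$ lies again in $\mathcal{L}$ because $\mathcal{L}$ is projectively resolving, and $\Ext^{1}(-,\Ker(f_{1}))$ vanishes on $\mathcal{L}$ since $f_{1}$ is special. Hence the class of the pullback $P=B\times_{C}L_{3}$ lifts along $f_{1}$ to a class $0\rightarrow L_{1}\rightarrow L_{2}\rightarrow L_{3}\rightarrow 0$ in $\Ext^{1}(L_{3},L_{1})$, whose pushout is $P$; the composite $L_{2}\rightarrow P\rightarrow B$ is then surjective with kernel an extension of $\Ker(f_{3})$ by $\Ker(f_{1})$, and your concluding steps (closure under extensions gives $L_{2}\in\mathcal{L}$; the long exact sequence gives $\Ext^{1}(L,\Ker(f_{2}))=0$, which already implies the precover property) go through verbatim. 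This is essentially the argument behind the sources the paper cites for this lemma, namely \cite[Theorem 3.1]{AA02} and \cite[Theorem 3]{DC98}; the paper itself gives no direct proof.
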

\begin{proof}It follows from \cite[Theorem 3.1]{AA02} or \cite[Theorem 3]{DC98}.
\end{proof}

\begin{lemma}\label{lem7} Let $\mathcal{L}$ be a class of $R$-modules. If $G, C$ are exact, $G_{i}, \Z_{i}(G)\in\mathcal{L}$ and $C_{i}, \Z_{i}(C)\in\mathcal{L}^{\perp}$ for all $i\in \mathbb{Z}$, then $\Ext^{1}(G, C)=0.$
\end{lemma}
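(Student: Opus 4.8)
The plan is to compute $\Ext^{1}(G,C)$ using the fact that both $G$ and $C$ are exact, so each decomposes along its cycle modules. First I would recall (from the discussion of $D^{n}(M)$ and from \cite{Gar99}) the standard adjunction-type isomorphisms for complexes: for a module $M$ and an exact complex $X$ one has $\Hom(D^{n}(M),X)\cong\Hom(M,X_{n})$ (or the dual), and more importantly there are natural isomorphisms $\Ext^{1}(D^{n}(M),X)\cong\Ext^{1}(M,X_{n})$ and, for the ``sphere'' complexes, $\Ext^{1}$ of an exact complex $X$ into an exact complex $Y$ reduces to $\Ext^{1}$ of the corresponding cycle modules. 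Concretely, if $G$ is exact then in the category $\mathcal{C}(R)$ one can resolve $G$ by a complex built from the $D^{n}$'s: the obvious surjection $\bigoplus_{n}D^{n}(G_{n})\twoheadrightarrow G$ has exact kernel again expressible through the $G_{n}$, and iterating gives a projective-type resolution of $G$ relative to the $D^{n}(-)$ pieces. The upshot I want is a natural identification of $\Ext^{1}_{\mathcal{C}(R)}(G,C)$ with a term built out of the module-level groups $\Ext^{1}_{R}(G_{i},C_{i})$, $\Ext^{1}_{R}(\Z_{i}(G),\Z_{i}(C))$ and $\Hom/\Ext$ between cycles.

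The cleanest route, which I would actually carry out, is via the short exact sequence of complexes relating $G$ to its cycles. Since $G$ is exact, for each $i$ there is a split-free short exact sequence $0\to\Z_{i}(G)\to G_{i}\to\Z_{i-1}(G)\to0$, and globally $G$ sits in a short exact sequence of complexes $0\to\Z(G)\to G\to \Z(G)[-1]\to0$ where $\Z(G)$ denotes the complex with zero differentials and $i$-th term $\Z_{i}(G)$. Hence it suffices to show $\Ext^{1}(\Z(G)[k],C)=0$ for $k=0,1$, i.e. $\Ext^{1}$ of a complex with zero differential (all of whose terms lie in $\mathcal{L}$) into $C$. For such a complex with zero differential, $\Ext^{1}$ into $C$ is computed termwise but twisted by the differential of $C$: one gets an exact sequence whose terms are $\prod_{i}\Ext^{1}_{R}(\Z_{i}(G),C_{i})$ together with $\prod_{i}\Hom_{R}(\Z_{i}(G),\text{(image or cycle terms of }C))$. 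Here is where the hypotheses on $C$ enter: $C_{i}\in\mathcal{L}^{\perp}$ kills the $\Ext^{1}_{R}(\Z_{i}(G),C_{i})$ terms because $\Z_{i}(G)\in\mathcal{L}$; and $\Z_{i}(C)\in\mathcal{L}^{\perp}$, combined with exactness of $C$ (so that $\Z_{i}(C)=B_{i}(C)$ and $C_{i}/\Z_{i}(C)\cong\Z_{i-1}(C)$), forces the connecting $\Hom$-level maps to be surjective, so no residual $\Ext^{1}$ survives. Assembling these vanishings gives $\Ext^{1}(G,C)=0$.

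An equivalent and perhaps more transparent packaging, which I might use instead to avoid spectral-sequence bookkeeping: build an explicit two-step resolution $0\to P_{1}\to P_{0}\to G\to0$ in $\mathcal{C}(R)$ with $P_{0}=\bigoplus_{i}D^{i}(G_{i})$ and $P_{1}=\bigoplus_{i}D^{i}(\Z_{i}(G))$ (the kernel computation uses exactness of $G$ and that $\Z_{i}(G),G_{i}\in\mathcal{L}$ to stay inside a tractable class), apply $\Hom(-,C)$, and use $\Hom(D^{i}(M),C)\cong\Hom_{R}(M,C_{i-1})$ plus $\Ext^{1}(D^{i}(M),C)\cong\Ext^{1}_{R}(M,C_{i-1})=0$ whenever $M\in\mathcal{L}$ (since $C_{i-1}\in\mathcal{L}^{\perp}$). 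Then $\Ext^{1}(G,C)$ is the first cohomology of the complex $0\to\Hom(P_{0},C)\to\Hom(P_{1},C)\to0$, i.e. the cokernel of a map between products of groups of the form $\Hom_{R}(G_{i},C_{i-1})$ and $\Hom_{R}(\Z_{i}(G),C_{i-1})$; a direct diagram chase, using once more the exactness of $C$ and $\Z_{i}(C)\in\mathcal{L}^{\perp}$ to split the relevant $\Hom$ sequences, shows this cokernel vanishes. The main obstacle I anticipate is getting the homological algebra of $\Ext^{1}$ in $\mathcal{C}(R)$ lined up correctly — in particular, verifying that the kernel of $P_{0}\to G$ really is again of the form $\bigoplus D^{i}(\Z_{i}(G))$ and that $D^{i}(M)$ with $M\in\mathcal{L}$ is ``$\Hom(-,C)$-acyclic,'' i.e. controlling the interplay between the complex-level $\Ext$ and the module-level $\Ext$; once that dictionary is in place, the two hypotheses $C_{i}\in\mathcal{L}^{\perp}$ and $\Z_{i}(C)\in\mathcal{L}^{\perp}$ do exactly the work needed and the rest is routine.
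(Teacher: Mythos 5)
Your main line of argument is correct, and it is genuinely different from the paper's. The paper proves the lemma by dimension shifting in the second variable: it embeds $C$ into an injective complex, $0\to C\to E\to D\to 0$, reduces the claim to surjectivity of $\Hom(G,E)\to\Hom(G,D)$, and then lifts an arbitrary $f:G\to D$ through $\beta$ degree by degree in a long explicit diagram chase, invoking $\Ext^{1}_{R}(\Z_{n}(G),\Z_{n}(C))=0$, $\Ext^{1}_{R}(G_{n},\Z_{n}(C))=0$ and $\Ext^{1}_{R}(\Z_{n}(G),C_{n})=0$ at the successive stages. You instead shift in the first variable: the cycle subcomplex gives $0\to \Z(G)\to G\to \Z(G)[-1]\to 0$ with both outer terms zero-differential complexes whose entries lie in $\mathcal{L}$ (this uses exactness of $G$ to identify $G_i/\Z_i(G)$ with $\Z_{i-1}(G)$), a zero-differential complex is a coproduct of sphere complexes $S^{n}(M)$, and $\Ext^{1}(S^{n}(M),C)$ is controlled by the exact sequence coming from $0\to S^{n-1}(M)\to D^{n}(M)\to S^{n}(M)\to 0$ together with $\Ext^{1}(D^{n}(M),C)\cong\Ext^{1}_{R}(M,C_{n})$: the $\Ext^{1}_{R}(M,C_{n})$ term dies because $C_{n}\in\mathcal{L}^{\perp}$, and the map $\Hom_{R}(M,C_{n})\to\Hom_{R}(M,\Z_{n-1}(C))$ is onto because $C$ is exact and $\Z_{n}(C)\in\mathcal{L}^{\perp}$. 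This is a sound and, to my mind, more structural proof; it isolates exactly where each hypothesis enters, at the cost of needing the standard $\Ext$-adjunction for disks and the identity $\Ext^{1}(\bigoplus_i A_i,C)\cong\prod_i\Ext^{1}(A_i,C)$, whereas the paper's argument is entirely elementary but much longer.

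One caution about your second ``packaging'': the kernel of the canonical surjection $\bigoplus_{i}D^{i}(G_{i})\to G$ is not $\bigoplus_{i}D^{i}(\Z_{i}(G))$; in degree $n$ it is $\{(x,y)\in G_{n}\oplus G_{n+1}: x=-\delta_{n+1}(y)\}\cong G_{n+1}$, so the kernel is a shift of $G$ itself, not a coproduct of disks (an exact complex is a coproduct of disks only when its cycle sequences split, cf.\ Lemma~\ref{lem1}). So the two-step resolution you propose there does not exist as stated, and that route would collapse into an unhelpful recursion. Also, with the paper's convention for $D^{n}(M)$, the adjunction is $\Hom(D^{i}(M),C)\cong\Hom_{R}(M,C_{i})$ rather than $\Hom_{R}(M,C_{i-1})$. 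Neither point affects your first argument, which is the one to keep.
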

\begin{proof} Consider the
exact sequence $0\rightarrow C\stackrel{\alpha}\rightarrow
E\stackrel{\beta}\rightarrow D\rightarrow0$ with $E$ an injective
complex. Then $\Hom(G, E)\rightarrow \Hom(G, D)\rightarrow \Ext^{1}(G,
C)\rightarrow0$ is exact. So we only need to prove that $\Hom(G,
E)\rightarrow \Hom(G, D)\rightarrow0$ is exact. Let $f\in \Hom(G, D)$.
Since the sequence $0\rightarrow C\rightarrow E\rightarrow
D\rightarrow0$ is exact and $C$ is exact, we have an exact sequence $0\longrightarrow
\Z_{n-1}(C)\longrightarrow \Z_{n-1}(E)\stackrel{\theta_{n-1}}\longrightarrow \Z_{n-1}(D)\longrightarrow0$ in $R$-Mod for all $n\in \mathbb{Z}$. By
the hypothesis, we have $\Ext^{1}(\Z_{n-1}(G), \Z_{n-1}(C))=0$. It follows that the diagram
\begin{center}
$\xymatrix{
 & & \Z_{n-1}(G) \ar@{.>}[dll]_{h_{n-1}}\ar[d]^{g_{n-1}}\\
    \Z_{n-1}(E)\ar[rr]_{\theta_{n-1}} & &  \Z_{n-1}(D)\ar[rr]&& 0} $
\end{center}
commutes, where $g_{n-1}=f_{n-1}|_{\Z_{n-1}(G)}$. Since
$C$, $E$ are exact, $D$ is also exact. For $G, D, E$, we have the
exact sequences $0\longrightarrow \Z_{n}(G)\stackrel{i_{n}}\longrightarrow
G_{n}\stackrel{p_{n}}\longrightarrow \Z_{n-1}(G)\longrightarrow0$, $0\longrightarrow \Z_{n}(D)\stackrel{l_{n}}\longrightarrow
D_{n}\stackrel{\eta_{n}}\longrightarrow \Z_{n-1}(D)\longrightarrow0$, $0\longrightarrow \Z_{n}(E)\stackrel{e_{n}}\longrightarrow
E_{n}\stackrel{\pi_{n}}\longrightarrow \Z_{n-1}(E)\longrightarrow0$ respectively, and $E_{n}=\Z_{n}(E)\bigoplus \Z_{n-1}(E)$. We define $w_{n}:
G_{n}\rightarrow D_{n}$  as $w_{n}=f_{n}-\beta_{n}\lambda
_{n-1}h_{n-1}p_{n}$ with $\lambda_{n-1}: \Z_{n-1}(E)\rightarrow \Z_{n}(E)\bigoplus \Z_{n-1}(E)$
via $\lambda_{n-1}(x)=(0, x)$. It is easy to check that $\eta_{n}w_{n}=0$.
So there exists $\xi_{n}: G_{n}\rightarrow \Z_{n}(D)$ such
that the following diagram
\begin{center}
$\xymatrix{
  0  \ar[r]^{} & G_{n} \ar@{.>}[d]_{\xi_{n}} \ar[r]^{id} &  G_{n} \ar[d]_{w_{n}} \ar[r]^{} & 0 \ar[d]_{}  \\
  0 \ar[r]^{} &  \Z_{n}(D)\ar[r]_{l^{n}} & D_{n} \ar[r]_{\eta_{n}} & \Z_{n-1}(D) \ar[r]^{} & 0} $
\end{center}
commutes. Since $\Z_{n}(C)$ is in $\mathcal{L}^{\perp}$ and $G_{n}$ is in $\mathcal{L}$, $\Ext^{1}(G_{n},
\Z_{n}(C))=0$, and so there exists $\sigma_{n}:
G_{n}\rightarrow \Z_{n}(E)$ such that
$\theta_{n}\sigma_{n}=\xi_{n}$. This means that there is a
commutative diagram
\begin{center}
$\xymatrix{
 & & & G_{n} \ar@{.>}[dl]_{\sigma_{n}}\ar[d]^{\xi_{n}}\\
 0\ar[r]&\Z_{n}(C) \ar[r] & \Z_{n}(E)\ar[r]_{\theta_{n}}  &  \Z_{n}(D)\ar[r]& 0} $
\end{center}
Now we define $\rho_{n}: G_{n}\rightarrow E_{n}$ via
$\rho_{n}(x)=(\sigma_{n}(x), h_{n-1}p_{n}(x))$, $\forall x\in
G_{n}$, and take $h_{n}=\sigma_{n}i_{n}$. It is not hard to check
that the following diagram
\begin{center}
$\xymatrix{
    & 0 \ar[r]^{}    & \Z_{n}(G) \ar'[d][dd]_{g_{n}} \ar[rr]^{i_{n}}\ar[dl]_{h_{n}} & & G_{n} \ar'[d][dd]_{f_{n}} \ar[rr]^{p_{n}}\ar[dl]_{\rho_{n}} & &\Z_{n-1}(G) \ar[dl]_{h_{n-1}}\ar'[d][dd]_{g_{n-1}} \ar[r]^{} & 0  \\
  0  \ar[r]^{}  &\Z_{n}(E) \ar[dr]_{\theta_{n}} \ar[rr]^{e_{n}}& & E_{n} \ar[dr]_{\beta_{n}} \ar[rr]^{\pi_{n}}& & \Z_{n-1}(E) \ar[dr]_{\theta_{n-1}} \ar[rr]^{} && 0  \\
 & 0  \ar[r]^{} & \Z_{n}(D)  \ar[rr]^{}& & D_{n}  \ar[rr]^{} && \Z_{n-1}(D)  \ar[r]^{}
 &0}$
\end{center}
commutes. On the other hand, since $\Z_{n-1}(E)$ is
injective, there exists an homomorphism $\varphi_{n-1}:
G_{n-1}\rightarrow \Z_{n-1}(E)$ such that the following
diagram
\begin{center}
$\xymatrix{
     0 \ar[r]^{}    & \Z_{n-1}(G) \ar[d]_{h_{n-1}} \ar[rr]^{i_{n-1}}& &G_{n-1}\ar@{.>}[lld]^{\varphi_{n-1}} \\
&\Z_{n-1}(E)}$
\end{center}
commutes. We take $\psi_{n-1}: G_{n-1}\rightarrow \Z_{n-1}(D)$ as $\psi_{n-1}=\theta_{n-1}\varphi_{n-1}$, and consider
the following diagram
\begin{center}
$\xymatrix{
  0  \ar[r]^{} & \Z_{n-1}(G) \ar[d]_{g_{n-1}} \ar[rr]^{i_{n-1}} &&  G_{n-1} \ar[d]_{f_{n-1}} \ar[rr]^{p_{n-1}} \ar[lld]_{\psi_{n-1}}&& \Z_{n-2}(G) \ar[d]^{g_{n-2}}\ar[r]\ar@{.>}[lld]_{t_{n-1}} &0 \\
  0 \ar[r]^{} &  \Z_{n-1}(D)\ar[rr]_{l_{n-1}} && D_{n-1} \ar[rr]_{\eta_{n-1}} && \Z_{n-2}(D) \ar[r]^{} & 0} $
\end{center}
where $g_{n-2}=f_{n-2}|_{\Z_{n-2}(G)}$. Since
$\psi_{n-1}i_{n-1}=g_{n-1}$, there exists an homomorphism $t_{n-1}$
such that $\eta_{n-1}t_{n-1}=g_{n-2}$ and
$t_{n-1}p_{n-1}+l_{n-1}\psi_{n-1}=f_{n-1}$. Since $C_{n-1}$ is
in $\mathcal{L}^{\perp}$ and $\Z_{n-2}(G)$ is in $\mathcal{L}$,
we have a commutative diagram
\begin{center} $\xymatrix{
 & & & \Z_{n-2}(G)\ar@{.>}[dl]_{s_{n-1}}\ar[d]^{t_{n-1}}\\
 0\ar[r]&C_{n-1} \ar[r] & E_{n-1}\ar[r]_{\beta_{n-1}}  & D_{n-1}\ar[r]& 0} $
\end{center}
Now we define $\rho_{n-1}: G_{n-1}\rightarrow E_{n-1}$ as
$\rho_{n-1}=e_{n-1}\varphi_{n-1}+s_{n-1}p_{n-1}$, and take
$h_{n-2}=\pi_{n-1}s_{n-1}$. It is not hard to check that
$\beta_{n-1}\rho_{n-1}=f_{n-1}$, $e_{n-1}h_{n-1}=\rho_{n-1}i_{n-1}$,
$h_{n-2}p_{n-1}=\pi_{n-1}\rho_{n-1}$, and
$\theta_{n-2}h_{n-2}=g_{n-2}$. That is, the following diagram
\begin{center}
$\xymatrix{
    & 0 \ar[r]^{}    & \Z_{n-1}(G) \ar'[d][dd]_{g_{n-1}} \ar[rr]^{i_{n-1}}\ar[dl]_{h_{n-1}} & & G_{n-1} \ar'[d][dd]_{f_{n-1}} \ar[rr]^{p_{n-1}}\ar[dl]_{\rho_{n-1}} & &\Z_{n-2}(G) \ar[dl]_{h_{n-2}}\ar'[d][dd]_{g_{n-2}} \ar[r]^{} & 0  \\
  0  \ar[r]^{}  &\Z_{n-1}(E) \ar[dr]_{\theta_{n-1}} \ar[rr]^{e_{n-1}}& & E_{n-1} \ar[dr]_{\beta_{n-1}} \ar[rr]^{\pi_{n-1}}& & \Z_{n-2}(E) \ar[dr]_{\theta_{n-2}} \ar[rr]^{} && 0  \\
 & 0  \ar[r]^{} & \Z_{n-1}(D)  \ar[rr]^{}& & D_{n-1}  \ar[rr]^{} && \Z_{n-2}(D)  \ar[r]^{}
 &0}$
\end{center}
commutes. Repeating above process, we can construct the morphism of
complexes $\rho: G\rightarrow E$ such that the diagram
\begin{center}
$\xymatrix{
 & G \ar@{.>}[dl]_{\rho}\ar[d]^{f}\\
 E\ar[r]_{\beta}&D \ar[r] &  0} $
\end{center}
commutes in $\mathcal{C}(R)$. This means that $\Hom(G, E)\rightarrow
\Hom(G, D)\rightarrow0$ is exact, as desired.
\end{proof}

\begin{proposition} \label{prop8} Let $\mathcal{L}$ be a projectively resolving and special precovering class in $R$-Mod. Then every exact complex has a special $\widetilde{\mathcal{L}}$-precover.
\end{proposition}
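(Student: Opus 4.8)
The plan is to construct the special $\widetilde{\mathcal{L}}$-precover degreewise, starting from special $\mathcal{L}$-precovers of the cycle modules and using Lemma \ref{lem6} to propagate them through the canonical short exact sequences $0\to\Z_i(C)\to C_i\to\Z_{i-1}(C)\to0$ of an exact complex $C$; the extension-vanishing condition will then be obtained from Lemma \ref{lem7}. So fix an exact complex $C$. For each $i\in\mathbb{Z}$, since $\mathcal{L}$ is special precovering, choose a special $\mathcal{L}$-precover $\ell_i:L_i\to\Z_i(C)$; then $L_i\in\mathcal{L}$, $\ker\ell_i\in\mathcal{L}^{\perp}$, and $\ell_i$ is surjective because $\mathcal{L}$ contains all projectives. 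Applying Lemma \ref{lem6} to the sequence $0\to\Z_i(C)\xrightarrow{j_i}C_i\xrightarrow{q_i}\Z_{i-1}(C)\to0$ with the special precovers $\ell_i$ and $\ell_{i-1}$ produces a module $M_i$, an exact sequence $0\to L_i\xrightarrow{\iota_i}M_i\xrightarrow{\pi_i}L_{i-1}\to0$, and a special $\mathcal{L}$-precover $\varphi_i:M_i\to C_i$ with $\varphi_i\iota_i=j_i\ell_i$ and $\ell_{i-1}\pi_i=q_i\varphi_i$, together with an exact sequence $0\to\ker\ell_i\to\ker\varphi_i\to\ker\ell_{i-1}\to0$.

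Next I assemble these into a complex $G$ by setting $G_i=M_i$ and $\delta_i^G=\iota_{i-1}\pi_i:M_i\to M_{i-1}$. Since $\pi_{i-1}\iota_{i-1}=0$ we get $\delta_{i-1}^G\delta_i^G=0$, and because $\pi_i$ is surjective with kernel $\iota_i(L_i)$ one checks $\Z_i(G)=\iota_i(L_i)=\operatorname{im}\delta_{i+1}^G$; hence $G$ is exact with $\Z_i(G)\cong L_i\in\mathcal{L}$, so $G\in\widetilde{\mathcal{L}}$. Using $\delta_i^C=j_{i-1}q_i$ and the commutativities above, $\varphi_{i-1}\delta_i^G=j_{i-1}\ell_{i-1}\pi_i=j_{i-1}q_i\varphi_i=\delta_i^C\varphi_i$, so $\varphi=(\varphi_i):G\to C$ is a morphism of complexes, and it is an epimorphism since each $\varphi_i$ is an epic $\mathcal{L}$-precover.

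It remains to see $\varphi$ is special, i.e. $\Ext^1(T,K)=0$ for all $T\in\widetilde{\mathcal{L}}$, where $K=\ker\varphi$. From the short exact sequence of complexes $0\to K\to G\xrightarrow{\varphi}C\to0$ and exactness of $G$ and $C$, the homology long exact sequence forces $K$ exact; passing to cycles and using that $\Z_i(G)=L_i\to\Z_i(C)$ is the surjection $\ell_i$ gives $\Z_i(K)=\ker\ell_i\in\mathcal{L}^{\perp}$. Also $K_i=\ker\varphi_i$ lies in $0\to\ker\ell_i\to K_i\to\ker\ell_{i-1}\to0$ with both ends in $\mathcal{L}^{\perp}$, and $\mathcal{L}^{\perp}$ is closed under extensions, so $K_i\in\mathcal{L}^{\perp}$. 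On the other hand, any $T\in\widetilde{\mathcal{L}}$ is exact with $\Z_i(T)\in\mathcal{L}$, and since $\mathcal{L}$ is closed under extensions, each $T_i\in\mathcal{L}$. Thus Lemma \ref{lem7}, applied with $T$ in the role of "$G$" and $K$ in the role of "$C$", gives $\Ext^1(T,K)=0$ for all $T\in\widetilde{\mathcal{L}}$. Together with $G\in\widetilde{\mathcal{L}}$ and $\varphi$ epic this shows $\varphi$ is a special $\widetilde{\mathcal{L}}$-precover (the precover property itself being automatic, by splitting the pullback of $\varphi$ along a given map from an object of $\widetilde{\mathcal{L}}$).

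I expect the main obstacle to be the bookkeeping in the gluing step: one must ensure that the modules $M_i$ coming from different applications of Lemma \ref{lem6} are linked by genuine maps (so that $\delta^G$ is well defined and $\varphi$ is a chain map), and that the identifications $\Z_i(G)\cong L_i$ and $\Z_i(K)\cong\ker\ell_i$ are compatible across all $i$ — this is why one fixes the $\ell_i$ once and for all before invoking Lemma \ref{lem6}. Everything else is formal given Lemmas \ref{lem6} and \ref{lem7}.
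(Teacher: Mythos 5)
Your proof is correct and follows essentially the same route as the paper: fix special $\mathcal{L}$-precovers of the cycles $\Z_i(C)$, use Lemma \ref{lem6} on the sequences $0\to\Z_i(C)\to C_i\to\Z_{i-1}(C)\to 0$ to build the complex $G\in\widetilde{\mathcal{L}}$ and the exact kernel complex $K$, and then invoke Lemma \ref{lem7} for the $\Ext^1$-vanishing. You actually spell out several verifications (that $G$ is an exact complex, that $\varphi$ is a chain map, and that the hypotheses of Lemma \ref{lem7} hold for $T$ and $K$) which the paper compresses into ``pasting together all the diagrams.''
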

\begin{proof} Let $E$ be an exact complex. Then we have short exact sequences $$0\rightarrow \Z_{i}(E)\rightarrow E_{i}\rightarrow \Z_{i-1}(E)\rightarrow 0.$$
By the hypothesis, there exists a special $\mathcal{L}$-precover $f^{'}_{i}: L^{'}_{i}\rightarrow \Z_{i}(E)$ of the module $\Z_{i}(E)$ for each $i\in \mathbb{Z}$. By Lemma \ref{lem6}, there exists a commutative diagram
\begin{center}
$\xymatrix{
& 0\ar[d] & 0\ar[d] & 0\ar[d] \\
0 \ar[r]  & \Ker(f^{'}_{i})\ar[r] \ar[d] & \Ker(f_{i})\ar[r]\ar[d] & \Ker(f^{'}_{i-1})\ar[r]\ar[d]&0\\
0\ar[r]  &L^{'}_{i} \ar[r]\ar[d]_{f^{'}_{i}} & L_{i} \ar[r]\ar[d]_{f_{i}} & L^{'}_{i-1} \ar[r]\ar[d]_{f^{'}_{i-1}}  &0\\
0\ar[r]  &\Z_{i}(E) \ar[r]\ar[d] & E_{i} \ar[r] \ar[d]& \Z_{i-1}(E) \ar[r]\ar[d]  &0\\
& 0 & 0 & 0 }$
\end{center}
with exact rows and columns such that $f_{i}: L_{i} \rightarrow E_{i}$ is a special $\mathcal{L}$-precover. Pasting together all the diagrams for all $i\in \mathbb{Z}$, we obtain that $$L=\cdots\rightarrow L_{i+1}\rightarrow L_{i}\rightarrow L_{i-1}\rightarrow \cdots$$ is in $\widetilde{\mathcal{L}}$ and $$K=\cdots\rightarrow \Ker(f_{i+1})\rightarrow \Ker(f_{i})\rightarrow \Ker(f_{i-1})\rightarrow \cdots$$ is exact. By Lemma \ref{lem7}, $\Ext^{1}(Q, K)=0$ for any $Q\in \widetilde{\mathcal{L}}$. Hence $L\rightarrow C$ is a special $\widetilde{\mathcal{L}}$-precover.
\end{proof}

\begin{theorem}\label{th2} Let $\mathcal{L}$ be a projectively resolving and special precovering class in $R$-Mod. Then every complex has a special $\widetilde{\mathcal{L}}$-precover.
\end{theorem}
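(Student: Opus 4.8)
The plan is to combine Proposition \ref{prop4} with Proposition \ref{prop8} directly. Proposition \ref{prop8} already establishes that every \emph{exact} complex admits a special $\widetilde{\mathcal{L}}$-precover, and Proposition \ref{prop4} shows (using the existence of exact covers from \cite[Theorem 3.18]{EJJ96}) that an $\widetilde{\mathcal{L}}$-precover of an arbitrary complex $C$ can be obtained by first taking an exact cover $E\to C$ and then composing with an $\widetilde{\mathcal{L}}$-precover $L\to E$. So the essential content of the theorem is already in place; what remains is to upgrade the conclusion of Proposition \ref{prop4} from ``precover'' to ``special precover'' when the input precover is special.

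First I would take the exact cover $\pi:E\to C$ guaranteed by \cite[Theorem 3.18]{EJJ96}. The key structural fact I would invoke is that this exact cover is an epimorphism with \emph{exact} kernel: writing $0\to K\to E\xrightarrow{\pi} C\to 0$, the complex $K$ is exact because $E$ is exact and exact covers are surjective with kernel an (exact) complex — more precisely, since $\widetilde{\mathcal{L}}\subseteq$ (exact complexes) and special $\widetilde{\mathcal{L}}$-precovers are epic, one checks the composite is epic. Next I would apply Proposition \ref{prop8} to $E$ to get a special $\widetilde{\mathcal{L}}$-precover $\varphi: L\to E$ with $L$ exact and $\Ext^{1}(Q,\Ker(\varphi))=0$ for all $Q\in\widetilde{\mathcal{L}}$. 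Then the composite $\psi=\pi\varphi: L\to C$ is the candidate special $\widetilde{\mathcal{L}}$-precover.

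The two things to verify are: (i) $\psi$ is an $\widetilde{\mathcal{L}}$-precover, which is exactly the content of the last sentence of the proof of Proposition \ref{prop4}, using that every $Q\in\widetilde{\mathcal{L}}$ is exact so any map $Q\to C$ factors through the exact cover $E\to C$; and (ii) $\Ext^{1}(Q,\Ker(\psi))=0$ for all $Q\in\widetilde{\mathcal{L}}$. For (ii) I would use the short exact sequence $0\to\Ker(\varphi)\to\Ker(\psi)\to K\to 0$ coming from the snake lemma applied to $\varphi$ and $\pi$. Applying $\Hom(Q,-)$ gives the long exact sequence, and since $\Ext^{1}(Q,\Ker(\varphi))=0$ by Proposition \ref{prop8}, it suffices to show $\Ext^{1}(Q,K)=0$. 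This is where Lemma \ref{lem7} enters: $K$ is exact, and one needs $K_{i},\Z_{i}(K)\in\mathcal{L}^{\perp}$. This holds because $K=\Ker(\pi)$ where $\pi$ is an exact \emph{cover}, hence $K$ lies in the right orthogonal of the class of exact complexes (by Wakamatsu's lemma / the defining property of covers), and in particular $K_i$ and $\Z_i(K)$ are in $\mathcal{L}^\perp$ since $D^i(L)$ and $\cdots\to 0\to L\to L\to 0\to\cdots$-type complexes built from $L\in\mathcal{L}$ are exact — detecting $\Ext^{1}(L,K_i)=0$ and $\Ext^{1}(L,\Z_i(K))=0$ from $\Ext^{1}(D^i(L),K)=0$ and similar. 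With Lemma \ref{lem7} giving $\Ext^{1}(Q,K)=0$ for $Q\in\widetilde{\mathcal{L}}$, we conclude $\Ext^{1}(Q,\Ker(\psi))=0$.

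I expect the main obstacle to be the careful verification that $K=\Ker(E\to C)$ satisfies the orthogonality hypotheses of Lemma \ref{lem7}, i.e.\ that $K_i$ and $\Z_i(K)$ lie in $\mathcal{L}^{\perp}$. This requires knowing that the exact cover from \cite[Theorem 3.18]{EJJ96} is a special precover (its kernel is right-orthogonal to the enveloping/covering class), and then transferring that module-level orthogonality through the complexes $D^i(L)$ and the ``sphere''/``disk'' complexes associated to modules in $\mathcal{L}$. If the exact cover theorem is only stated as giving a cover and not a special precover, one may instead need to argue via Wakamatsu's lemma that $\Ext^{1}(E',K)=0$ for all exact $E'$, and specialize to the exact complexes built from $\mathcal{L}$-modules. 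Modulo that point, the proof is a short assembly of the preceding propositions and lemmas.
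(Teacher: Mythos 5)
Your overall architecture matches the paper's: take an exact (pre)cover $E\to C$, apply Proposition \ref{prop8} to $E$, compose, and control the kernel via the short exact sequence $0\to\Ker(\varphi)\to\Ker(\psi)\to K\to 0$ (the paper packages this as a pullback diagram, but it is the same extension). However, your main route to $\Ext^{1}(Q,K)=0$ contains a genuine error: you assert that $K=\Ker(E\xrightarrow{\pi}C)$ is exact ``because $E$ is exact.'' It is not, in general. The long exact homology sequence of $0\to K\to E\to C\to 0$ with $E$ exact gives $H_{n-1}(K)\cong H_{n}(C)$, so $K$ is exact if and only if $C$ is — and the whole point of the theorem is to handle non-exact $C$. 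Consequently Lemma \ref{lem7}, whose hypotheses require the target complex to be exact, cannot be applied to the pair $(Q,K)$, and the verification of $K_i,\Z_i(K)\in\mathcal{L}^{\perp}$ that you flag as the ``main obstacle'' is aimed at hypotheses you cannot satisfy anyway.

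The correct repair is precisely the contingency you mention in your last sentence, and it is what the paper does: one needs $E\to C$ to be a \emph{special} exact precover, i.e.\ $K\in\mathcal{E}^{\perp}$ where $\mathcal{E}$ is the class of exact complexes (the paper cites \cite[Theorem 2.2.4]{Gar99} for this; alternatively, Wakamatsu's lemma applied to the exact cover of \cite[Theorem 3.18]{EJJ96}, since $\mathcal{E}$ is closed under extensions, yields the same conclusion). Then $\Ext^{1}(Q,K)=0$ for every $Q\in\widetilde{\mathcal{L}}$ follows immediately from the single containment $\widetilde{\mathcal{L}}\subseteq\mathcal{E}$ — no appeal to Lemma \ref{lem7}, and no levelwise analysis of $K$, is needed at this stage. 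With that substitution your argument closes: $\Ker(\varphi)\in\widetilde{\mathcal{L}}^{\perp}$ by Proposition \ref{prop8}, $K\in\mathcal{E}^{\perp}\subseteq\widetilde{\mathcal{L}}^{\perp}$, and $\widetilde{\mathcal{L}}^{\perp}$ is closed under extensions, so $\Ker(\psi)\in\widetilde{\mathcal{L}}^{\perp}$.
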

\begin{proof} Let $C$ be any complex. Then by \cite[Theorem 2.2.4]{Gar99} there exists an exact sequence $$0\rightarrow K\rightarrow E\rightarrow C\rightarrow 0$$ where $E\rightarrow C$ is a special exact precover of $C$. By Proposition \ref{prop8}, we have an exact sequence $$0\rightarrow G\rightarrow L\rightarrow E\rightarrow 0$$
where $L\rightarrow E$ is a special $\widetilde{\mathcal{L}}$-precover of $E$. Consider the following pullback diagram
\begin{center}
$\xymatrix{
     &  0\ar[d]_{}  & 0 \ar[d]_{}   \\
      & G\ar@{=}[r] \ar[d]&G\ar[d]\\
   0\ar[r]&W \ar[r]^{} \ar[d]& L \ar[d]_{} \ar[r]^{} & C\ar@{=}[d]_{} \ar[r]^{} & 0  \\
    0\ar[r] &    K  \ar[d]\ar[r]^{} & E\ar[d]_{} \ar[r]&C \ar[r]& 0\\
    &  0& 0  &
      }$
\end{center}
Since $K\in \mathcal{E}^{\perp}$ where $\mathcal{E}$ is the class of all exact complexes, we obtain $K\in \widetilde{\mathcal{L}}^{\perp}$. But $G\in \widetilde{\mathcal{L}}^{\perp}$, then $W\in \widetilde{\mathcal{L}}^{\perp}$. This implies $L\rightarrow C$ is a special $\widetilde{\mathcal{L}}$-precover of $C$.
\end{proof}

Dual arguments to the above give the following result concerning the special $\widetilde{\mathcal{L}}$-preenvelopes.

\begin{theorem}\label{th3} Let $\mathcal{L}$ be an injectively coresolving and special preenvelping class in $R$-Mod. Then every complex has a special $\widetilde{\mathcal{L}}$-preenvelope.
\end{theorem}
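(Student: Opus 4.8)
My plan is to transport the whole chain of arguments running from Lemma~\ref{lem6} through Theorem~\ref{th2} into its opposite-category form, systematically interchanging ``precover''$\leftrightarrow$``preenvelope'', ``projectively resolving''$\leftrightarrow$``injectively coresolving'', kernels$\leftrightarrow$cokernels, $\mathcal{L}^{\perp}\leftrightarrow{}^{\perp}\mathcal{L}$, injective complexes$\leftrightarrow$projective complexes, and pullbacks$\leftrightarrow$pushouts. Two of the ingredients are genuine external inputs rather than formal dualities, and I would invoke them in dual form: the dual of Lemma~\ref{lem6} (for $\mathcal{L}$ injectively coresolving and a short exact sequence $0\to A\to B\to C\to 0$ with special $\mathcal{L}$-preenvelopes $f_{1}\colon A\to L_{1}$ and $f_{3}\colon C\to L_{3}$, there is a $3\times 3$ diagram with exact rows and columns in which $f_{2}\colon B\to L_{2}$ is a special $\mathcal{L}$-preenvelope and the cokernel row $0\to\operatorname{Coker}f_{1}\to\operatorname{Coker}f_{2}\to\operatorname{Coker}f_{3}\to 0$ is exact), which is the statement dual to \cite[Theorem 3.1]{AA02} / \cite[Theorem 3]{DC98}; and the dual of the fact used at the start of Theorem~\ref{th2}, namely that every complex $C$ admits a special exact preenvelope, i.e.\ a short exact sequence $0\to C\to E\to K\to 0$ with $E$ exact and $K\in{}^{\perp}\mathcal{E}$, where $\mathcal{E}$ is the class of exact complexes.

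Next I would prove the dual of Lemma~\ref{lem7}: if $A$ and $B$ are exact complexes with $A_{i},\Z_{i}(A)\in{}^{\perp}\mathcal{L}$ and $B_{i},\Z_{i}(B)\in\mathcal{L}$ for all $i\in\mathbb{Z}$, then $\Ext^{1}(A,B)=0$. I would run the term-by-term construction of Lemma~\ref{lem7} in reverse: embed $A$ in a short exact sequence $0\to P\to E\to A\to 0$ with $E$ a projective complex, reduce via the long exact $\Ext$-sequence (using $\Ext^{1}(E,B)=0$) to showing $\Hom(E,B)\to\Hom(P,B)\to 0$ is exact, and build the required morphism $E\to B$ one degree at a time, using the splitting $E_{n}=\Z_{n}(E)\oplus\Z_{n-1}(E)$ of a projective complex, the vanishing of $\Ext^{1}(\Z_{i}(A),\Z_{i}(B))$, $\Ext^{1}(A_{n},\Z_{n}(B))$ and $\Ext^{1}(\Z_{n-2}(A),B_{n-1})$ supplied by the orthogonality hypotheses, and the projectivity of each $\Z_{n}(E)$ in place of the injectivity used in the original. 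Granting this, the dual of Proposition~\ref{prop8} is immediate: for an exact complex $E$, apply the dual of Lemma~\ref{lem6} to each $0\to\Z_{i}(E)\to E_{i}\to\Z_{i-1}(E)\to 0$ using special $\mathcal{L}$-preenvelopes $\Z_{i}(E)\to L_{i}'$ (which exist because $\mathcal{L}$ is special preenveloping), paste the diagrams over all $i$ to obtain a complex $L\in\widetilde{\mathcal{L}}$, a monomorphism $E\to L$, and an exact cokernel complex $K$ with $K_{i},\Z_{i}(K)\in{}^{\perp}\mathcal{L}$; the dual of Lemma~\ref{lem7} then gives $\Ext^{1}(K,Q)=0$ for all $Q\in\widetilde{\mathcal{L}}$ (note $Q_{i}\in\mathcal{L}$ since $\mathcal{L}$ is closed under extensions), so $E\to L$ is a special $\widetilde{\mathcal{L}}$-preenvelope of $E$.

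Finally I would assemble exactly as in Theorem~\ref{th2}, with the pullback replaced by a pushout. For an arbitrary complex $C$, take a special exact preenvelope $0\to C\to E\to K\to 0$ ($E$ exact, $K\in{}^{\perp}\mathcal{E}$), then a special $\widetilde{\mathcal{L}}$-preenvelope $0\to E\to L\to G\to 0$ ($G\in{}^{\perp}\widetilde{\mathcal{L}}$) from the dual of Proposition~\ref{prop8}. Forming the pushout of $E\to L$ along $E\to K$ produces $W:=L/C$ sitting in short exact sequences $0\to K\to W\to G\to 0$ and $0\to C\to L\to W\to 0$. Since $\widetilde{\mathcal{L}}\subseteq\mathcal{E}$ we have ${}^{\perp}\mathcal{E}\subseteq{}^{\perp}\widetilde{\mathcal{L}}$, hence $K\in{}^{\perp}\widetilde{\mathcal{L}}$; this together with $G\in{}^{\perp}\widetilde{\mathcal{L}}$ and the closure of ${}^{\perp}\widetilde{\mathcal{L}}$ under extensions forces $W\in{}^{\perp}\widetilde{\mathcal{L}}$, so the composite $C\to L$ is a monomorphism with $L\in\widetilde{\mathcal{L}}$ and $\operatorname{Coker}(C\to L)\in{}^{\perp}\widetilde{\mathcal{L}}$, i.e.\ a special $\widetilde{\mathcal{L}}$-preenvelope of $C$.

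The step I expect to be the real work is the dual of Lemma~\ref{lem7}: the multi-degree diagram chase producing the lift is the technical core, and I would have to check carefully that every splitting and every $\Ext^{1}$-vanishing used there dualizes correctly, with ${}^{\perp}\mathcal{L}$ in place of $\mathcal{L}^{\perp}$ and projective complexes in place of injective ones. A secondary point to watch is that the special exact preenvelope starting the assembly is an external input (the dual of the result cited via \cite[Theorem 2.2.4]{Gar99}) and must be justified or cited as such rather than derived by formal duality.
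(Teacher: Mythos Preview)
Your proposal is correct and is precisely the dualization the paper intends: the paper's own proof of Theorem~\ref{th3} consists of the single sentence ``Dual arguments to the above give the following result,'' and you have accurately unwound what those dual arguments are (dual Horseshoe-type lemma, dual of Lemma~\ref{lem7} via a projective resolution of the first argument, dual of Proposition~\ref{prop8}, and the pushout replacing the pullback in Theorem~\ref{th2}). Your flagging of the two genuine external inputs---the dual of \cite[Theorem 3.1]{AA02}/\cite[Theorem 3]{DC98} and the existence of special exact preenvelopes (available from \cite{EG99} or \cite{Gar99})---is exactly right and is the only place where more than formal duality is needed.
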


\begin{corollary} Every complex has a special projective precover and a special injective preenvelope.
\end{corollary}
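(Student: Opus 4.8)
The plan is to obtain both assertions by specializing Theorems \ref{th2} and \ref{th3}. For the first one I would take $\mathcal{L}=\mathcal{P}$, the class of projective $R$-modules. First I would verify that $\mathcal{P}$ is projectively resolving: it contains all projective modules by definition, it is closed under extensions (an extension of projective modules splits), and it is closed under kernels of surjections (if $0\rightarrow A\rightarrow B\rightarrow C\rightarrow 0$ with $B,C$ projective, the sequence splits, so $A$ is a direct summand of $B$ and hence projective). Next I would note that $\mathcal{P}$ is special precovering: for any module $M$, choose an epimorphism $P\rightarrow M$ with $P$ projective; since $\Ext^{1}(Q,-)=0$ for every projective $Q$, the kernel $K$ satisfies $\Ext^{1}(Q,K)=0$, so $P\rightarrow M$ is a special $\mathcal{P}$-precover. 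Theorem \ref{th2} then gives that every complex has a special $\widetilde{\mathcal{P}}$-precover, and since $\widetilde{\mathcal{P}}$ coincides with the class of projective complexes (a complex is projective if and only if it is exact with projective cycles, as recalled in Section 3), this is precisely a special projective precover.

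For the second assertion I would dualize, taking $\mathcal{L}=\mathcal{I}$, the class of injective $R$-modules. Then $\mathcal{I}$ is injectively coresolving: it contains all injectives, it is closed under extensions, and it is closed under cokernels of injections (in $0\rightarrow A\rightarrow B\rightarrow C\rightarrow 0$ with $A,B$ injective the sequence splits, so $C$ is a direct summand of $B$ and hence injective). Moreover $\mathcal{I}$ is special preenveloping: for any module $M$, an embedding $M\hookrightarrow E$ into an injective module has cokernel $C$ with $\Ext^{1}(C,E')=0$ for every injective $E'$. Theorem \ref{th3} then yields a special $\widetilde{\mathcal{I}}$-preenvelope for every complex, and since $\widetilde{\mathcal{I}}$ is the class of injective complexes, this is a special injective preenvelope.

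The only point requiring care, and the main (though still routine) obstacle, is matching the two notions: confirming that $\widetilde{\mathcal{P}}$ (exact complexes with projective cycles) is exactly the class of objects that are projective in $\mathcal{C}(R)$, and dually that $\widetilde{\mathcal{I}}$ is the class of injective complexes. This is the classical characterization of projective and injective complexes already invoked in the paper, so once it is cited everything else is an immediate specialization of the general theorems.
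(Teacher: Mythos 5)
Your proposal is correct and is exactly the intended argument: the paper states this corollary immediately after Theorems \ref{th2} and \ref{th3} without proof, precisely because it is the specialization to $\mathcal{L}=\mathcal{P}$ and $\mathcal{L}=\mathcal{I}$ that you carry out, together with the identification of $\widetilde{\mathcal{P}}$ and $\widetilde{\mathcal{I}}$ with the projective and injective complexes already recalled in Section 3. Your verifications that $\mathcal{P}$ is projectively resolving and special precovering, and dually for $\mathcal{I}$, are all accurate.
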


A pair of classes of modules
($\mathcal{A}$, $\mathcal{B}$) is called a cotorsion pair (or
cotorsion theory) \cite{Salce1979} if
$\mathcal{A}^{\perp}=\mathcal{B}$ and
$^{\perp}\mathcal{B}=\mathcal{A}$, where $\mathcal{A}^{\perp}=\{B\in
R-Mod: \ \Ext^{1}(A, B)=0\; \text {for all} \ A\in
\mathcal{A}\}$, $^{\perp}\mathcal{B}=\{A\in
R-Mod: \ \Ext^{1}(A, B)=0\; \text {for all} \ B\in
\mathcal{B}\}$.
A cotorsion pair ($\mathcal{A}$, $\mathcal{B}$) is called hereditary
if whenever $0\longrightarrow A'\longrightarrow A\longrightarrow
A''\longrightarrow 0$ is exact with $A, A''\in \mathcal{A}$ then
$A'$ is also in $\mathcal{A}$. A cotorsion pair ($\mathcal{A}$,
$\mathcal{B}$) is called complete if every module has a
special $\mathcal{B}$-preenvelope and a special
$\mathcal{A}$-precover.

\begin{corollary} If $(\mathcal{A}, \mathcal{B})$ is a hereditary and complete cotorsion pair in $R$-Mod, then every complex has a special $\widetilde{\mathcal{A}}$-precover, and special $\widetilde{\mathcal{B}}$-preenvelope.
\end{corollary}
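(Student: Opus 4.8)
The plan is to obtain this corollary as a direct consequence of Theorems \ref{th2} and \ref{th3}. Thus the entire task reduces to verifying two things: that $\mathcal{A}$ is a projectively resolving and special precovering class in $R$-Mod, and, dually, that $\mathcal{B}$ is an injectively coresolving and special preenveloping class in $R$-Mod.

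The ``special'' halves are immediate from completeness of the cotorsion pair: by definition every module has a special $\mathcal{A}$-precover and a special $\mathcal{B}$-preenvelope, so $\mathcal{A}$ is special precovering and $\mathcal{B}$ is special preenveloping. For the ``resolving'' halves, first I would check that $\mathcal{A} = {}^{\perp}\mathcal{B}$ is projectively resolving. It contains every projective module $P$ since $\Ext^{1}(P, B) = 0$ for all $B$; it is closed under extensions because, applying $\Hom(-, B)$ with $B \in \mathcal{B}$ to a short exact sequence $0 \to A' \to A \to A'' \to 0$, the term $\Ext^{1}(A, B)$ sits between $\Ext^{1}(A'', B) = 0$ and $\Ext^{1}(A', B) = 0$; and it is closed under kernels of epimorphisms, which is precisely the stated definition of ``hereditary''. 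Hence $\mathcal{A}$ is projectively resolving, and Theorem \ref{th2} applies to give a special $\widetilde{\mathcal{A}}$-precover of every complex.

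Dually, $\mathcal{B} = \mathcal{A}^{\perp}$ contains all injective modules and is closed under extensions by the same long exact sequence argument applied to $\Hom(A, -)$; it remains to see that $\mathcal{B}$ is closed under cokernels of monomorphisms. This is the only nonroutine point, and it is where heredity is really used on the $\mathcal{B}$ side: for a cotorsion pair, closure of $\mathcal{A}$ under kernels of epimorphisms is equivalent to the vanishing $\Ext^{i}(A, B) = 0$ for all $A \in \mathcal{A}$, $B \in \mathcal{B}$ and all $i \geq 1$, and this in turn is equivalent to closure of $\mathcal{B}$ under cokernels of monomorphisms (see, e.g., \cite{Gobel2006}). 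Concretely one proves $\Ext^{2}(A, B) = 0$ by choosing $0 \to A' \to P \to A \to 0$ with $P$ projective, observing $A' \in \mathcal{A}$ by heredity, and using $\Ext^{2}(A, B) \cong \Ext^{1}(A', B) = 0$; then for $0 \to B \to B' \to B'' \to 0$ with $B, B' \in \mathcal{B}$ and any $A \in \mathcal{A}$, exactness of $\Ext^{1}(A, B') \to \Ext^{1}(A, B'') \to \Ext^{2}(A, B)$ forces $\Ext^{1}(A, B'') = 0$, i.e. $B'' \in \mathcal{B}$. Thus $\mathcal{B}$ is injectively coresolving, and Theorem \ref{th3} applies to give a special $\widetilde{\mathcal{B}}$-preenvelope of every complex.

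The main obstacle is exactly this transfer of the heredity condition from $\mathcal{A}$ (where it is phrased via short exact sequences ending in $\mathcal{A}$) to the dual closure property of $\mathcal{B}$, which passes through the vanishing of $\Ext^{2}$; once that is in place, the corollary is nothing more than an application of the two theorems already established.
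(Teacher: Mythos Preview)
Your proposal is correct and follows essentially the same route as the paper: the paper's proof is the single line ``It follows from Theorem \ref{th2}, Theorem \ref{th3}, and \cite[Theorem 2.1.4]{EO02},'' where the cited result from \cite{EO02} is precisely the equivalence of the various formulations of heredity that you spelled out (closure of $\mathcal{A}$ under kernels of epimorphisms $\Leftrightarrow$ $\Ext^{i}(A,B)=0$ for all $i\geq 1$ $\Leftrightarrow$ closure of $\mathcal{B}$ under cokernels of monomorphisms). Your version simply unpacks that reference; the overall strategy is identical.
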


\begin{proof} It follows from Theorem \ref{th2}, Theorem \ref{th3}, and \cite[Theorem 2.1.4]{EO02}.
\end{proof}

Recall that an $R$-module $M$ is FP-injective \cite{B70} if $\Ext^{1}
_{R}(F, M)=0$ for every finitely presented $R$-module $F$. An $R$-module $N$ is FP-projective \cite{MD05} if $\Ext^{1}
_{R}(N, M)=0$ for every FP-injective $R$-module $M$. Let $\mathcal{FP}$, $\mathcal{FI}$ denote the class of FP-projective
$R$-modules and FP-injective $R$-modules respectively. If $R$ is a coherent ring, then the cotorsion pair $(\mathcal{FP}, \mathcal{FI})$ is hereditary and complete by \cite[Proposition 3.6]{MD07}. By \cite[Theorem 2.10]{WL2011}, $\widetilde{\mathcal{FI}}$ exactly denotes the class of FP-injective complexes. So we obtain the following result.

\begin{corollary} If $R$ is a coherent ring,  then every complex has a special $\widetilde{\mathcal{FP}}$-precover, and special FP-injective preenvelope.
\end{corollary}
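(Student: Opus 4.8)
The plan is to obtain this as a direct application of the preceding corollary on hereditary and complete cotorsion pairs, instantiated at the pair $(\mathcal{FP},\mathcal{FI})$ of FP-projective and FP-injective $R$-modules, together with the identification of $\widetilde{\mathcal{FI}}$ with the class of FP-injective complexes. So the whole argument is an assembly of already-cited facts, and the only thing that really needs attention is that coherence of $R$ is invoked in each place where it is needed.

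First I would record the module-theoretic input: since $R$ is coherent, $(\mathcal{FP},\mathcal{FI})$ is a hereditary and complete cotorsion pair in $R$-Mod. This is precisely \cite[Proposition 3.6]{MD07}. Explicitly, $\mathcal{FP}^{\perp}=\mathcal{FI}$ and $^{\perp}\mathcal{FI}=\mathcal{FP}$ by the very definitions of FP-injective and FP-projective modules; heredity and completeness are the content of the cited proposition and are where coherence enters (coherence guarantees that a direct limit of FP-injectives is FP-injective and controls the behaviour of finitely presented modules, which is what makes the pair complete and hereditary). With this in hand, I would apply the corollary above with $\mathcal{A}=\mathcal{FP}$ and $\mathcal{B}=\mathcal{FI}$ to conclude that every complex has a special $\widetilde{\mathcal{FP}}$-precover and a special $\widetilde{\mathcal{FI}}$-preenvelope in $\mathcal{C}(R)$.

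It then remains to reinterpret the statement "special $\widetilde{\mathcal{FI}}$-preenvelope" as "special FP-injective preenvelope". For this I would invoke \cite[Theorem 2.10]{WL2011}, which (again using coherence of $R$) says that a complex is FP-injective precisely when it is an $\mathcal{FI}$ complex, i.e.\ that $\widetilde{\mathcal{FI}}$ coincides with the class of FP-injective complexes. Substituting this equality of classes into the conclusion of the previous paragraph immediately yields that every complex has a special FP-injective preenvelope, which together with the special $\widetilde{\mathcal{FP}}$-precover assertion completes the proof.

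I do not expect a genuine obstacle here, since the corollary does the real work; the one point of care is bookkeeping about where coherence is used, namely in \cite[Proposition 3.6]{MD07} to get a hereditary complete cotorsion pair and in \cite[Theorem 2.10]{WL2011} to identify $\widetilde{\mathcal{FI}}$ with the FP-injective complexes. As long as both citations are applied over the coherent ring $R$, the hypotheses of the earlier corollary are met and the argument goes through verbatim.
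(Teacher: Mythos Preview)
Your proposal is correct and follows exactly the paper's own argument: apply the preceding corollary to the hereditary complete cotorsion pair $(\mathcal{FP},\mathcal{FI})$ via \cite[Proposition 3.6]{MD07}, and then identify $\widetilde{\mathcal{FI}}$ with the class of FP-injective complexes via \cite[Theorem 2.10]{WL2011}.
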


Recall from \cite{EJ1995} that a left $R$-module $M$ is called Gorenstein injective if there is an exact sequence
\[\cdots \longrightarrow I_{1}\longrightarrow I_{0}\longrightarrow I_{-1}\longrightarrow I_{-2}\longrightarrow \cdots\]
of injective left $R$-modules such that $M\cong\ker (I_{0}\longrightarrow I_{-1})$  and $\Hom_{R} (\mathcal{I}, -)$ leaves the sequence exact. Let $\mathcal{GI}$ denote the class of Gorenstetin injective modules. Over noetherian rings, existence of Gorenstein injective preenvelopes for all modules is proved by Enochs and Lopes-Ramos in \cite{EL02}. Krause \cite[Theorem 7.12]{K05} proves a stronger result:

\begin{lemma}If $R$ is a noetherian ring, then every module has a special Gorenstein injective preenvelope.
\end{lemma}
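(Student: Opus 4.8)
The plan is to derive the statement as the ``preenvelope half'' of a completeness statement for a cotorsion pair. Concretely, I would prove that over a noetherian ring the pair $\mathfrak{G}=({}^{\perp}\mathcal{GI},\,\mathcal{GI})$ is a complete cotorsion pair in $R$-Mod. Completeness means that every module $M$ fits into a short exact sequence $0\rightarrow M\rightarrow G\rightarrow L\rightarrow 0$ with $G\in\mathcal{GI}$ and $L\in{}^{\perp}\mathcal{GI}$; since $L\in{}^{\perp}\mathcal{GI}$ says exactly that $\Ext^{1}(L,G')=0$ for all $G'\in\mathcal{GI}$, such a sequence is precisely a special $\mathcal{GI}$-preenvelope of $M$. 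So the work splits into (i) showing $\mathfrak{G}$ is a cotorsion pair, i.e.\ $({}^{\perp}\mathcal{GI})^{\perp}=\mathcal{GI}$, and (ii) showing $\mathfrak{G}$ is complete.

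For (i) I would use the standard structure theory of Gorenstein injective modules: $\mathcal{GI}$ contains all injectives, is closed under extensions and under cokernels of monomorphisms between its members, and -- here noetherianness first enters -- is closed under arbitrary direct sums and products, and every module of finite injective dimension lies in ${}^{\perp}\mathcal{GI}$. Given $N\in({}^{\perp}\mathcal{GI})^{\perp}$ one embeds $N$ into an injective module and assembles, step by step, a complete injective resolution having $N$ as one of its cocycles: an ordinary injective coresolution on one side, and on the other side the hypothesis $\Ext^{1}(L,N)=0$ for $L\in{}^{\perp}\mathcal{GI}$ supplies the liftings and keeps each $\Hom(I,-)$, $I$ injective, exact. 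Hence $N\in\mathcal{GI}$, and $\mathfrak{G}$ is a (hereditary) cotorsion pair.

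For (ii) the cleanest route is to exhibit a \emph{set} $\mathcal{S}$ of $R$-modules with $\mathcal{GI}=\mathcal{S}^{\perp}$ and then invoke the Eklof--Trlifaj theorem, which makes any cotorsion pair of the form $({}^{\perp}(\mathcal{S}^{\perp}),\,\mathcal{S}^{\perp})$ complete. Noetherianness is essential here: by Matlis' theory every injective $R$-module is a direct sum of the indecomposables $E(R/\mathfrak{p})$, so exactness of $\Hom(I,-)$ on a complex of injectives can be tested on the single \emph{set} $\{E(R/\mathfrak{p}):\mathfrak{p}\in\mathrm{Spec}\,R\}$ (products of exact complexes of modules being exact), and a deconstructibility argument for $\mathcal{GI}$ then produces $\mathcal{S}$. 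Equivalently one may run Krause's argument: over a noetherian ring the homotopy category $\mathsf{K}_{\mathrm{ac}}(\mathrm{Inj}\,R)$ of acyclic complexes of injective $R$-modules is compactly generated, its inclusion into the homotopy category of all complexes of injectives has a left adjoint, and applying that adjoint to a minimal injective resolution of $M$ yields, after the standard bookkeeping, the desired sequence $0\rightarrow M\rightarrow G\rightarrow L\rightarrow 0$ with $G\in\mathcal{GI}$ and $L\in{}^{\perp}\mathcal{GI}$.

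The main obstacle is step (ii), specifically the existence of the set $\mathcal{S}$ -- equivalently, the compact generation of $\mathsf{K}_{\mathrm{ac}}(\mathrm{Inj}\,R)$ over an arbitrary noetherian ring. Once that is in hand everything is formal: Eklof--Trlifaj gives completeness of $\mathfrak{G}$, hence special $\mathcal{GI}$-preenvelopes for all modules, while (i) guarantees that the cokernel term $L$ genuinely lies in ${}^{\perp}\mathcal{GI}$, so that the preenvelope is special in the required sense. I would also note that this strengthens the preenvelope-existence result recalled just above, whose argument produces only a not necessarily special $\mathcal{GI}$-preenvelope.
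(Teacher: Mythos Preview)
The paper does not actually prove this lemma; it simply quotes it as Krause's result \cite[Theorem~7.12]{K05} and moves on. So there is no ``paper's own proof'' to compare against beyond that citation.

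Your outline is essentially the standard route to Krause's theorem, and you correctly identify the crux as producing a set $\mathcal{S}$ with $\mathcal{GI}=\mathcal{S}^{\perp}$ (equivalently, the compact generation of $\mathsf{K}_{\mathrm{ac}}(\mathrm{Inj}\,R)$ over a noetherian ring). One organizational remark: your step~(i), showing $({}^{\perp}\mathcal{GI})^{\perp}\subseteq\mathcal{GI}$ directly, is harder than your sketch suggests---building the \emph{left} half of a complete injective resolution for $N\in({}^{\perp}\mathcal{GI})^{\perp}$ typically already requires something like a special ${}^{\perp}\mathcal{GI}$-precover of $N$, i.e.\ completeness. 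In practice (i) and (ii) are handled in one stroke: once $\mathcal{GI}=\mathcal{S}^{\perp}$ is established, $({}^{\perp}(\mathcal{S}^{\perp}),\,\mathcal{S}^{\perp})$ is automatically a cotorsion pair and Eklof--Trlifaj makes it complete. So the entire content sits in the existence of $\mathcal{S}$, exactly as you concede in your final paragraph; what you have written is therefore an accurate high-level map of the proof rather than a self-contained argument.
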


Based on the above lemma and Theorem \ref{th3}, we have the following corollary.

\begin{corollary} If $R$ is a noetherian ring,  then every complex has a special $\widetilde{\mathcal{GI}}$-preenvelope.
\end{corollary}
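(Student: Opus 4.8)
The plan is to reduce the statement to an application of Theorem \ref{th3}, by verifying its two hypotheses for the class $\mathcal{GI}$ of Gorenstein injective modules over a noetherian ring. That is, I would check (i) that $\mathcal{GI}$ is injectively coresolving in $R$-Mod, and (ii) that $\mathcal{GI}$ is a special preenveloping class in $R$-Mod. Once both are in hand, Theorem \ref{th3} immediately yields that every complex of $R$-modules has a special $\widetilde{\mathcal{GI}}$-preenvelope, which is exactly the assertion of the corollary.

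For hypothesis (ii), there is nothing to do: this is precisely the content of the Lemma of Krause \cite[Theorem 7.12]{K05} quoted just above, which says that over a noetherian ring every module has a special Gorenstein injective preenvelope; this is what "special preenveloping class" means. For hypothesis (i), I would recall that over a noetherian ring the class of Gorenstein injective modules is well behaved: it obviously contains all injective modules (take $I_i = M$ for $i \le 0$ and the identity / zero maps appropriately, or more simply a trivial complete injective resolution), it is closed under cokernels of injections by a standard "horseshoe-type" argument on complete injective resolutions, and it is closed under extensions. These closure properties are classical — they appear in Holm's "Gorenstein homological dimensions" (and are valid over a general ring for extensions and cokernels of monomorphisms once one knows $\mathcal{GI}$ is the right half, or left half, of a suitable situation) — so I would simply cite them. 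Putting these together gives that $\mathcal{GI}$ is injectively coresolving.

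With (i) and (ii) established, I would write: "By the Lemma above, $\mathcal{GI}$ is a special preenveloping class in $R$-Mod, and since $R$ is noetherian, $\mathcal{GI}$ is injectively coresolving. Hence the hypotheses of Theorem \ref{th3} are satisfied with $\mathcal{L} = \mathcal{GI}$, and therefore every complex has a special $\widetilde{\mathcal{GI}}$-preenvelope." This is essentially the one-line proof the authors indicate by "Based on the above lemma and Theorem \ref{th3}."

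The main obstacle, such as it is, is not in the logical structure but in pinning down hypothesis (i) cleanly: one must be sure that $\mathcal{GI}$ really is closed under cokernels of injections over a noetherian ring and contains the injectives, so that "injectively coresolving" applies verbatim as defined in the excerpt. I expect this to be dispatched by citing the standard references on Gorenstein injective modules rather than by any genuinely new argument, since the noetherian hypothesis is exactly what makes the class of Gorenstein injective modules resolving/coresolving in the required sense. No delicate computation is needed beyond invoking Theorem \ref{th3}.
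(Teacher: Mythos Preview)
Your proposal is correct and matches the paper's own reasoning exactly: the authors simply write ``Based on the above lemma and Theorem~\ref{th3}'', which is precisely your plan of invoking Krause's lemma for hypothesis~(ii) and the standard injectively coresolving property of $\mathcal{GI}$ for hypothesis~(i), then applying Theorem~\ref{th3} with $\mathcal{L}=\mathcal{GI}$. The only minor remark is that the injectively coresolving property of $\mathcal{GI}$ (extensions, cokernels of injections, containing injectives) holds over any ring by Holm's results, so the noetherian hypothesis is used solely in (ii).
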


\begin{example} Let $\mathcal{L}$ be a projectively resolving class of modules.

(1) If $f: L\rightarrow M$ is an $\mathcal{L}$-cover in $R$-Mod, then $D^{n}(f): D^{n}(L)\rightarrow D^{n}(M)$ is an $\widetilde{\mathcal{L}}$-cover in $\mathcal{C}(R)$ for all $n\in \mathbb{Z}$.

(2) If $f: L\rightarrow M$ is an $\mathcal{L}$-cover in $R$-Mod, then the induced morphism
\begin{center}
$\xymatrix{
          \cdots   \ar[r]&0\ar[d] \ar[r]&    L \ar[d] \ar[r]& L\ar[d] \ar[r]& 0\ar[d] \ar[r]& \cdots   \\
   \cdots   \ar[r]&0 \ar[r]&    L  \ar[r]& M\ar[r]& 0 \ar[r]& \cdots   }$
\end{center}
is an $\widetilde{\mathcal{L}}$-cover in $\mathcal{C}(R)$.

(3) Let $0\rightarrow L\rightarrow G\rightarrow M\rightarrow 0$ be a
short exact sequence in $R$-Mod with $L\in \mathcal{L}$. Let $L^{'}\rightarrow M$ be an $\mathcal{L}$-cover of $M$. Then we form the pullback diagram
\begin{center}
$\xymatrix{
&& & 0\ar[d]_{}  & 0 \ar[d]_{}   \\
&&  &  K  \ar[d]\ar@{=}[r]^{} & K\ar[d]_{}  \\
(seq1)& 0\ar[r]&L\ar[r]^{} \ar@{=}[d]& X\ar[d] \ar[r]& L^{'}\ar[d]_{}\ar[r]&0\\
(seq2)& 0 \ar[r]^{} & L  \ar[r]^{} & G\ar[d]_{} \ar[r]^{} &M  \ar[d]_{}\ar[r]^{} & 0  \\
&&&  0& 0  &
      }$
\end{center}
It is easy to check that the morphism $(seq1)\rightarrow (seq2)$ is
an $\widetilde{\mathcal{L}}$-cover in $\mathcal{C}(R)$.

 (4) Let $0\rightarrow
N\rightarrow M\rightarrow L\rightarrow 0$ be a short exact sequence
with $L\in \mathcal{L}$. Let $L^{'}\rightarrow M$ be an $\mathcal{L}$-cover of $M$ in $R$-Mod. Then we from the pullback
diagram
\begin{center}
$\xymatrix{
     && 0\ar[d]_{}  & 0 \ar[d]_{} &  \\
     &&K\ar[d]_{} \ar@{=}[r]^{} & K \ar[d]_{} \\
    (seq3)  & 0\ar[r]& X\ar[d]^{} \ar[r]& L^{'}\ar[d]\ar[r] &L\ar@{=}[d]\ar[r]&0 \\
(seq4)& 0\ar[r]& N \ar[d]\ar[r]& M\ar[d]\ar[r] &L\ar[r]&0 \\
 & & 0& 0  &
      }$
\end{center}
It is not hard to check that the morphism $(seq3)\rightarrow (seq4)$
is an $\widetilde{\mathcal{L}}$-cover in $\mathcal{C}(R)$.
\end{example}


\begin{thebibliography}{AB}

\bibitem{AEG01} Aldrich, S.T., Enochs, E.E., Garc\'{\i}a Rozas, J.R., Oyonarte, L., Covers and envelopes in Grothendieck categories: flat covers of complexes with applications. \emph{J. Algebra}, \textbf{243}(2001), 615--630.
\bibitem{AA02} Akinci, K. D., Alizade, R., Special precovers in cotorsion theories. \emph{Proc. Edinb. Math. Soc.}, \textbf{45}(2002), 411--420.
\bibitem{DC98} Ding, N.Q., Chen, J. L., Relative covers and envelopes. \emph{Acta Math. Sinica (in Chinese)}, \textbf{41}(1998), 609--616.
\bibitem{EJ81} Enochs, E.E., Injective and
flat covers, envelopes and resolvents. \emph{Israel J. Math.}, \textbf{39}(1981), 189--209.
\bibitem{EJ1995} Enochs, E.E., Jenda, O.M.G., Gorenstein injective and projective modules. \emph{Math. Z.}, \textbf{220}(1995), 611--633.
\bibitem{EJ00} Enochs, E.E., Jenda, O.M.G., \emph{Relative homological algebra}. (Berlin-New York: Walter de Gruyter 2000).
\bibitem{EG99} Enochs, E.E., Garc\'{\i}a Rozas, J.R., Exact envelopes of complexes. \emph{Comm. Algebra}, \textbf{27}(1999), 1615--1627.
\bibitem{EG98} Enochs, E.E., Garc\'{\i}a Rozas, J.R., Flat covers of complexes. J. Algebra, \textbf{210}(1998), 86--102.
\bibitem{EL02} Enochs, E.E., Lopes-Ramos, J.A., \emph{Kaplansky classes}. Rend.SEm.Mat.Univ.Padova, \textbf{107}(2002), 67--79.
\bibitem{EO02} Enochs, E.E., Oyonarte, L., \emph{Covers, envelopes and cotorsion theories}. (New York: Nova Science Publishers, Inc. 2002).
\bibitem{EJJ96} Enochs, E.E., Jenda, O.M.G., Xu, J.Z., Orthogonality in the category of complexes. \emph{Math. J. Okayama Univ.}, \textbf{38}(1996), 25-46.
\bibitem{Gar99} Garc\'{\i}a Rozas, J.R., \emph{Covers and envelopes in the category of complexes of modules}. (CRC Press, Boca Raton, FL, 1999).
\bibitem{Gil04} Gillespie, J., The
flat model structure on Ch(R). \emph{Tran. Amer. Math. Soc.}, \textbf{356}(2004), 3369--3390.
\bibitem{Gobel2006}  G\"{o}bel, R., Trlifaj, J.,
\emph{Approximations and endomorphism algebras of modules}. (Berlin, New
York: Walter de Gruyter, 2006).
\bibitem{Ia11} Iacob, A., DG-injective covers, $\sharp$-injective covers. \emph{Comm. Algebra}, \textbf{39}(2011), 1673--1685.
\bibitem{K05} Krause, H., The stable derived category of a noetherian scheme. \emph{Compos. Math.}, \textbf{141}(2005), 1128--1162.
\bibitem{LN11} Liang, L.,  Ding, N.Q., Yang, G., Covers and Envelopes by $\sharp$-$\mathcal{F}$ Complexes. \emph{Comm. Algebra}, \textbf{39}(2011), 3253--3277.
\bibitem{MD05} Mao, L.X., Ding, N.Q., FP-projective dimension. \emph{Comm. Algebra}, \textbf{33}(2005), 1153--1170.
\bibitem{MD07} Mao, L.X., and Ding, N.Q., Envelopes and covers by modules of finite FP-injective
and flat dimensions. \emph{Comm. Algebra}, \textbf{35}(2007), 833--849.
\bibitem{B70} Stenstr\"{o}m, B., Coherent rings and FP-injective modules. \emph{J. London Math. Soc.}, \textbf{2}(1970), 323--329.
\bibitem{Salce1979}  Salce, L., Cotorsion theories for abelian
groups. \emph{Symposia Math.}, \textbf{23}(1979), 11--32.
\bibitem{WL11} Wang, Z.P., Liu, Z.K., Some covers and envelopes in chain complex category of $R$-modules. \emph{J. Aust. Math. Soc.}, \textbf{90}(2011), 385--401.
\bibitem{WL2011} Wang, Z.P., Liu, Z.K., FP-ijective complexes and FP-injective dimension of complexes. \emph{J. Aust. Math. Soc.}, \textbf{91}(2011), 163--187.
\bibitem{Xu96} Xu, J., \emph{Flat covers of modules}. Lecture Notes in Math, 1634, (Springer, 1996).

\end{thebibliography}
\end{document}